\documentclass[reqno,10pt]{amsart}

\usepackage{a4wide}
\usepackage{color}
\usepackage[T2A]{fontenc}
\usepackage[utf8]{inputenc}
\usepackage[english]{babel}
\numberwithin{equation}{section}

\usepackage{amsmath, amsfonts, amssymb, amsthm, mathtools, bm, bbm, esint, mathrsfs, nccmath, upgreek, thmtools}
\usepackage[backend=biber,style=numeric-comp,sortcites,giveninits,doi=false,isbn=false,url=false,eprint=false, maxbibnames=99]{biblatex}

 \usepackage{relsize}

\usepackage[colorlinks,citecolor=blue,linkcolor=red]{hyperref}
\usepackage[noabbrev,capitalise,nameinlink]{cleveref}
\hypersetup{colorlinks={true},linkcolor={red},citecolor=blue}
\usepackage{csquotes}

\makeatletter
\newcommand{\bg}{\bBigg@{0.5}}
\newcommand{\biggg}{\bBigg@{3}}
\newcommand{\Biggg}{\bBigg@{3.5}}
\newcommand{\bigggg}{\bBigg@{4}}

\makeatother

\usepackage{graphicx}
\usepackage{resmes}

\AtBeginBibliography{\footnotesize\sloppy}

\DeclareNameAlias{author}{family-given}

\theoremstyle{plain}
\newtheorem{theo}{Theorem}[section]
\newtheorem{lem}[theo]{Lemma}
\newtheorem{prop}[theo]{Proposition}

\theoremstyle{definition}
\newtheorem{defi}[theo]{Definition}
\newtheorem{rem}[theo]{Remark}



\usepackage{scalerel}

\newcommand{\1}{\mathbbm{1}}
\newcommand{\G}{\mathit{\Gamma}}
\newcommand{\Cur}{\mathrm{Cur}}

\newcommand{\GDis}{\operatorname{{\kern-0.1em}GDis}}
\newcommand{\ADis}{\operatorname{{\kern-0.1em}ADis}}
\newcommand{\Sel}{\operatorname{{\kern-0.1em}Sel}}
\newcommand{\Dual}{\operatorname{{\kern-0.1em}Dual}}

\newcommand{\X}{\mathsf{X}}

\newcommand{\Nat}{\mathbb{N}}
\newcommand{\Y}{\mathsf{Y}}
\newcommand{\Z}{\mathsf{Z}}
\newcommand{\dX}{\mathsf{d}_{\mathsf{X}}}
\newcommand{\dY}{\mathsf{d}_{\mathsf{Y}}}
\newcommand{\dZ}{\mathsf{d}_{\mathsf{Z}}}
\newcommand{\mX}{\mathfrak{m}_{\mathsf{X}}}

\newcommand{\D}{\mathrm{d}}

\newcommand{\Leb}{\mathrm{L}}

\newcommand{\ms}{\operatorname{{\kern-0.1em}\mathsf{ms}}}

\newcommand{\En}{\mathrm{E}}
\newcommand{\Mod}{\mathrm{Mod}}

\newcommand{\dom}{\mathrm{dom}}

\newcommand{\Mas}{\operatorname{{\kern-0.1em}\mathbf{M}}}
\newcommand{\Len}{\operatorname{{\kern-0.1em}Len}}

\newcommand{\R}{\mathbb{R}}

\newcommand{\LIP}{\mathrm{LIP}}
\DeclareMathOperator*{\esup}{ess\,sup}

\makeatletter
\let\save@mathaccent\mathaccent
\newcommand*\if@single[3]{%
  \setbox0\hbox{${\mathaccent"0362{#1}}^H$}%
  \setbox2\hbox{${\mathaccent"0362{\kern0pt#1}}^H$}%
  \ifdim\ht0=\ht2 #3\else #2\fi
  }
\newcommand*\rel@kern[1]{\kern#1\dimexpr\macc@kerna}
\newcommand*\widebar[1]{\@ifnextchar^{{\wide@bar{#1}{0}}}{\wide@bar{#1}{1}}}
\newcommand*\wide@bar[2]{\if@single{#1}{\wide@bar@{#1}{#2}{1}}{\wide@bar@{#1}{#2}{2}}}
\newcommand*\wide@bar@[3]{%
  \begingroup
  \def\mathaccent##1##2{%
    \let\mathaccent\save@mathaccent
    \if#32 \let\macc@nucleus\first@char \fi
    \setbox\z@\hbox{$\macc@style{\macc@nucleus}_{}$}%
    \setbox\tw@\hbox{$\macc@style{\macc@nucleus}{}_{}$}%
    \dimen@\wd\tw@
    \advance\dimen@-\wd\z@
    \divide\dimen@ 3
    \@tempdima\wd\tw@
    \advance\@tempdima-\scriptspace
    \divide\@tempdima 10
    \advance\dimen@-\@tempdima
    \ifdim\dimen@>\z@ \dimen@0pt\fi
    \rel@kern{0.6}\kern-\dimen@
    \if#31
      \overline{\rel@kern{-0.6}\kern\dimen@\macc@nucleus\rel@kern{0.4}\kern\dimen@}%
      \advance\dimen@0.4\dimexpr\macc@kerna
      \let\final@kern#2%
      \ifdim\dimen@<\z@ \let\final@kern1\fi
      \if\final@kern1 \kern-\dimen@\fi
    \else
      \overline{\rel@kern{-0.6}\kern\dimen@#1}%
    \fi
  }%
  \macc@depth\@ne
  \let\math@bgroup\@empty \let\math@egroup\macc@set@skewchar
  \mathsurround\z@ \frozen@everymath{\mathgroup\macc@group\relax}%
  \macc@set@skewchar\relax
  \let\mathaccentV\macc@nested@a
  \if#31
    \macc@nested@a\relax111{#1}%
  \else
    \def\gobble@till@marker##1\endmarker{}%
    \futurelet\first@char\gobble@till@marker#1\endmarker
    \ifcat\noexpand\first@char A\else
      \def\first@char{}%
    \fi
    \macc@nested@a\relax111{\first@char}%
  \fi
  \endgroup
}
\makeatother

\makeatother

\makeatletter
\renewcommand{\rightmark}{\expandafter\@secondoftwo\topmark{}{}}
\makeatother

\title[A relaxation to the post-composition approach]{A relaxation to the post-composition approach\\
to metric-valued Sobolev maps}

\author[R. D. Oleinik]{Roman D. Oleinik}

\address{{International School for Advanced Studies, Trieste, Italy}\newline
\indent {Moscow Institute of Physics and
Technology, Dolgoprudny, Russia}
}

\email{oleinik.r@phystech.edu}

\usepackage{tcolorbox}
\begin{document}
\begin{abstract}
We establish that the definition of Sobolev mappings from metric measure spaces to metric spaces relying on the post-composition approach of Ambrosio--Reshetnyak admits, under certain transparent assumptions on the spaces, an equivalent reformulation that is substantially weaker than the principal version. In detail, the corresponding definition requires maps of this kind to meet two successive tests in order to be called Sobolev: first a ``qualitative'' one and then a ``quantitative'' one. At the same time, our result allows the relevant conclusion to be drawn based solely on the ``qualitative'' part. Remarkably, the conditions on the spaces sufficient for this to hold fit entirely into the singular flavor of the subject. More explicitly, such a phenomenon emerges if either the source space is of finite Hausdorff dimension and satisfies the Sobolev-to-Luzin--Lipschitz property or the target space is of finite Hausdorff dimension. All this provides a significant generalization of several earlier achievements on the topic.
\end{abstract}
\maketitle
\allowdisplaybreaks

\tableofcontents

\newpage

\section{Introduction}\label{ss:Intr}
The classical Sobolev theory, as well as its bounded-variation counterpart, is known to deal primarily with real-valued functions defined on domains of Euclidean spaces. Alongside this, a central leitmotif of the last decades within this subject lies in extending the classes of both source and target spaces for maps between which a certain amount of the corresponding analysis is still possible. And the development outcomes for each of these components meaningfully differ.

On one hand, the most general type of source spaces considered in the described regard is represented by so-called metric measure spaces, which are modeled by a triple $(\X,\dX,\mX)$ consisting of a metric space $(\X,\dX)$ and a Borel measure $\mX$ thereon, typically satisfying a priori some mild extra conditions. As great pieces of the literature in this direction (where mostly real-valued functions are studied, sometimes in parallel with Banach-valued ones too), we suggest the monographs \cite{HKST15} and \cite{GP20}. Notably, in such an abstract setting, there exist quite many approaches to the theory known to this day, which propose different answers to the question of what real-valued functions on the space should be called Sobolev. And one of the most powerful results in the area states that certain of those (the most ``canonical'' ones in a sense) end up being completely equivalent, under some reasonable technical assumptions on the space at least. While this fact was originally established in \cite{AGS14} and \cite{AGS13}, its slightly more modern exposition is comprehensively presented in \cite{AILP24}. As the corresponding output of what is important for our discussion at this stage, let us record the following. Given a metric measure space $\X=(\X,\dX,\mX)$ and $p\in [1,+\infty)$, we can associate with $\X$ the homogeneous space $\mathrm{S}^{1,p}(\X)$ of Sobolev functions $\X\to \R$ (where the homogeneity means, and will mean throughout the manuscript, in the usual way that there are no a priori integrability requirements for functions in the space); moreover, given any $\phi\in \mathrm{S}^{1,p}(\X)$, there can be constructed the so-called minimal weak upper gradient $|\mathbf{D} \phi|_p$ (forming basically a certain naturally defined element of $\Leb^p_+(\X)$), which should be viewed as ``the modulus of the gradient'' of $\phi$ (actually depending on $p$ at this level of generality, as the notation suggests), even though the notion of gradient itself does not have a preexisting meaning in such a framework. As a useful side comment, all the same is true in the realm of functions of bounded variation as well (up to some appropriate modifications). For the sake of clarity, especially for the experienced reader, we highlight that the notions invoked above will be understood in our manuscript specifically in the Newtonian sense, exactly the one adopted within \cite{HKST15}. And the necessary further details on all these objects will be given in \cref{ss:FunCl}.

It turns out on the other hand that, for the role of a target space within the mentioned discourse, one can take an essentially arbitrary metric space. And similarly to the situation with source spaces, there exist various approaches to the question of what metric-valued maps on a given space should be designated as Sobolev, the choice between which is moreover somewhat independent of the analogous choice discussed above. For example, several of those are explored, in the setting of maps of bounded variation though, within \cite{BNP25}; some additional information on this topic can also be found in \cite{GT21}, \cite{O25}, and \cite{O26.1}. And the very approach that falls within the scope of our work, the weakest (or, from another perspective, the broadest) among all in a sense, can be labeled as the post-composition one.

The post-composition approach appeared originally, in the context of metric-valued maps of bounded variation on Euclidean domains, in the work \cite{A90} by L. Ambrosio. Essentially the same concept but in application to metric-valued Sobolev maps on Euclidean domains was then considered by Y. G. Reshetnyak in \cite{R97}. The associated core idea underneath relies on the following natural principle: given a metric-valued map on a Euclidean domain, its ``modulus of the gradient'' must dominate the modulus of the gradient (now existing) of the post-composition of the map with any real-valued $1$-Lipschitz function on the target space. Thus, passing to the setting of metric measure spaces as source spaces, we immediately arrive at the following definition: given a metric measure space $\X=(\X,\dX,\mX)$, a metric space $\Y=(\Y,\dY)$, and $p\in [1,+\infty)$, the homogeneous space $\mathrm{S}^{1,p}(\X,\Y)$ of Sobolev maps $\X\to \Y$ is stipulated to be the collection of all maps $\Phi\colon \X\to \Y$ such that the conditions below hold.
\begin{itemize}
    \item[$\rm I)$] For any Lipschitz function $f\colon \Y\to \R$, one has
    \begin{equation}
        f\circ \Phi\in \mathrm{S}^{1,p}(\X).
    \end{equation}
    \item[$\rm II)$] There is $G\in \Leb^p_+(\X)$ such that, for any $1$-Lipschitz function $f\colon \Y\to \R$, one has
    \begin{equation}
        \big|\mathbf{D}(f\circ \Phi)\big|_p(x)\leq G(x)\qquad \text{for $\mX$-a.e. $x\in \X$}.
    \end{equation}
\end{itemize}
Notably, such a general formulation requires no regularity either from the source space or from the target space.

As should be detected from the previous definition, the post-composition approach is inherently divided into two separate levels: first, within the ``qualitative'' part, one requires that all the post-compositions of a map with real-valued Lipschitz functions are ``regular''; second, within the ``quantitative'' part, one requires that all such post-compositions are ``equi-regular''. In this regard, it seems reasonable to label corresponding maps that satisfy only the condition of the first level as ``weakly regular'', by a natural analogy with the terminology adopted in the setting of topological vector spaces. Thus, under the same entries as above, we can consider the homogeneous space $\mathrm{wS}^{1,p}(\X,\Y)$ of weakly Sobolev maps $\X\to \Y$ consisting of all such maps for which condition I) holds. And, by the very definition, we have the straightforward inclusion $\mathrm{S}^{1,p}(\X,\Y)\subseteq \mathrm{wS}^{1,p}(\X,\Y)$, which can be read in words as follows: any Sobolev map is weakly Sobolev. The necessary details on these Sobolev classes will be given in \cref{ss:FunCl}.

In certain situations though, it may happen that (for any map between given spaces) there is no need for the second level of the post-composition approach, as the first level already implies it, i.e., any weakly Sobolev map is Sobolev. Indeed, if the target metric space is just a finite-dimensional Banach space, or, at least, can be embedded in a biLipschitz way into such a one, the stated attribute can be easily verified to be in place, actually for any source metric measure space. However, this condition is known to be quite restrictive, i.e., the class of geometrically interesting spaces having it is rather poor, so it is a meaningful question to ask under what assumptions on either the source or target spaces one can detect the same effect. For example, in a moderately less obvious way than in the biLipschitz case, the desired feature can be established to hold true (again for any source metric measure space) if the target metric space admits a Lipschitz BLD-embedding (where the abbreviation stands for ``bounded-length-distortion'') into a finite-dimensional Banach space, in the sense used in \cite{LD13}, which is already a significantly more general condition. And in fact, this very observation was exactly what eventually led to the appearance of the work \cite{O26.2} by the author of the current manuscript, the role of which in our exposition is to be separately discussed in the sequel, and what became the starting point for all the analysis developed in the present paper.

The first attempt to discover a phenomenon of the indicated kind in the setting of generic target spaces, i.e., without any special embeddability characteristics, was due to the author of the current manuscript and A. I. Tyulenev in \cite{OT24}. There the authors dealt specifically with maps from segments of the real line, understood as equipped with the natural metric and measure (which form perhaps the simplest metric measure spaces to consider) to essentially arbitrary metric spaces. The main result of that work can be briefly formulated as follows: a map from a segment of the real line to a separable, complete metric space is weakly Sobolev if and only if it is Sobolev, in the same sense as fixed previously. The corresponding proof, however, relied heavily on exactly the $1$-dimensionality of the source space and therefore did not admit a direct extension to higher-dimensional source spaces. Also, a similar result was almost simultaneously reached, in slightly different terms and with quite different techniques though, in \cite{B26} by V. I. Bakhtin. Subsequently, such an attribute was remarkably established for $2$-dimensional (as well as reproved for $1$-dimensional) Euclidean domains by S. V. Pavlov (see the forthcoming paper \cite{P26}), with much more involved methods exploiting the Euclidean structure of the source space even to a greater extent. Again, the relevant proof therein could not be transferred to higher dimensions in a straightforward way. And this became a borderline for the results in this direction at that moment.

In addition, stepping slightly aside from our exact discourse, let us provide the reader with the following supplementary information. The very same result by Oleinik--Tyulenev in fact covered also the case of continuous maps of bounded variation (from segments of the real line), with appropriate adjustments to the problem, but curiously did not cover the case of mere maps of bounded variation (from segments of the real line), as the techniques exploited there broke down without the additional continuity assumption (which is automatically present in the Sobolev context). And it was in fact discovered soon that this assumption is indeed inevitable for all this to behave in the same way. Namely, the subsequent work \cite{ST26} by D. M. Stolyarov and A. I. Tyulenev (see also the later work \cite{S26} by D. M. Stolyarov) demonstrated the existence of a map from a segment of the real line to, for instance, a separable Hilbert space that is weakly of bounded variation but not of bounded variation, still in the very same sense. Such an important circumstance is quite useful to compare with what happens within our forthcoming discussion.

In the current manuscript, we establish a substantial generalization of (certain parts of) several previous results, namely the ones by Oleinik--Tyulenev, Bakhtin, and Pavlov, within a completely singular setting, i.e., our consideration involves nothing related to the smooth context. More specifically, as for the source space, we take a metric measure space $\X=(\X,\dX,\mX)$, the precise meaning of which in our exposition will be clarified in \cref{ss:PrInf}, and, as for the target space, we take a metric space $\Y=(\Y,\dY)$, on which some extra peripheral conditions will be imposed explicitly whenever necessary. Then, the main structural assumptions on both these spaces will be expressed purely in terms of their Hausdorff dimension, understood in the conventional sense. Additionally, as one more attribute playing a role in our result, we will deal with the so-called Sobolev-to-Luzin--Lipschitz property of metric measure spaces, the meaning of which can be found in \cref{ss:FunCl}. In short, it stipulates that any real-valued Sobolev function on the space has the Luzin--Lipschitz property, i.e., loosely speaking, can be decomposed into a countable collection of Lipschitz functions. Some relevant discussion on these matters will be provided in the sequel.

In its most digestible version, our main result can be formulated as follows. A more general statement, from a slightly different perspective, is given by \cref{theo:GenTheo}.
\begin{theo}\label{theo:MainRes}
    Let $\X=(\X,\dX,\mX)$ be a metric measure space, let $\Y=(\Y,\dY)$ be a separable metric space, let $p\in [1,+\infty)$. Suppose either of the conditions below holds.
    \begin{enumerate}
        \item[$\rm I)$] The space $\X$ is of finite Hausdorff dimension and satisfies the $p$-Sobolev-to-Luzin--Lipschitz property.
    \item[$\rm II)$] The space $\Y$ is of finite Hausdorff dimension. 
        \end{enumerate}
    Then the following equality holds:
    \begin{equation}
    \begin{gathered}
       \mathrm{wS}^{1,p}(\X,\Y)=\mathrm{S}^{1,p}(\X,\Y).
        \end{gathered}
    \end{equation}
\end{theo}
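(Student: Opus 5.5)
The inclusion $\mathrm{S}^{1,p}(\X,\Y)\subseteq\mathrm{wS}^{1,p}(\X,\Y)$ is immediate, so the task is to take $\Phi\in\mathrm{wS}^{1,p}(\X,\Y)$ and build a single $G\in\Leb^p_+(\X)$ dominating $|\mathbf{D}(f\circ\Phi)|_p$ for every $1$-Lipschitz $f$. The natural candidate is the pointwise essential supremum over a countable family. By separability of $\Y$, fix a dense sequence $(y_n)$ and set $f_n:=\dY(\cdot,y_n)$. In the usual way, any $1$-Lipschitz $f$ satisfies $|\mathbf{D}(f\circ\Phi)|_p\le\sup_n|\mathbf{D}(f_n\circ\Phi)|_p$ a.e. To see this, write $f$ via McShane-type infima over the $f_n$ plus constants, and use the lattice and locality properties of minimal weak upper gradients. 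Hence it suffices to show that
\[
G:=\sup_n|\mathbf{D}(f_n\circ\Phi)|_p
\]
lies in $\Leb^p(\X)$. This reduces to a uniform bound: control $\big\|\sup_{n\le N}|\mathbf{D}(f_n\circ\Phi)|_p\big\|_{\Leb^p}$ independently of $N$, or at least on pieces of a countable partition of $\X$.

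The key mechanism is a Baire-category or closed-graph argument. Consider the linear map $T\colon\LIP_b(\Y)\to\mathrm{S}^{1,p}(\X)$ (modulo constants) given by $f\mapsto f\circ\Phi$. Condition I) makes $T$ everywhere defined, and its graph is closed because $\mathrm{S}^{1,p}$ has lower semicontinuity under pointwise convergence. However, $\LIP(\Y)$ with the Lipschitz seminorm is not a Banach space in which a closed graph theorem directly yields the supremum bound. What one gets is $\|\,|\mathbf{D}(f\circ\Phi)|_p\|_{\Leb^p}\le C\,\mathrm{Lip}(f)$, which is a bound on individual norms and not on the supremum. To upgrade, I would apply the same argument to finite-dimensional vector-valued test maps $F=(f_1,\dots,f_k)\colon\Y\to\ell^\infty_k$. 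Then $\Phi$ composed with $F$ lands in a finite-dimensional Banach space, where weak and strong regularity coincide, as noted earlier in the introduction. This gives $\sup_{i\le k}|\mathbf{D}(f_i\circ\Phi)|_p\lesssim$ the ``metric differential'' of $F\circ\Phi$. The price is that such $F$ is only $1$-Lipschitz into $\ell^\infty_k$, and the norm of $T$ on $\ell^\infty_k$-valued maps might blow up with $k$. Controlling this growth is exactly where finite Hausdorff dimension must enter.

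In case II), I would use the fact that $\Y$ has finite Hausdorff dimension $d$ to approximate $\Y$ at each scale by nets whose covering numbers grow polynomially. Via a Lipschitz partition of unity subordinate to these coverings, I would construct Lipschitz maps of $\Y$ into finite-dimensional spaces with controlled distortion of lengths, in the spirit of the BLD-embedding remark. The goal is to show the dependence on $k$ is dominated by a summable scale sum. In case I), the roles swap: the Sobolev-to-Luzin--Lipschitz property lets me decompose $\X$ into countably many pieces on which every $f_n\circ\Phi$ is Lipschitz. The finite Hausdorff dimension of $\X$ then bounds the image complexity, since a Lipschitz image of a set of dimension $d$ has dimension at most $d$. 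This reduces to case II) locally, and locality of $|\mathbf{D}\cdot|_p$ patches the pieces. One subtlety is that the Luzin decomposition must be chosen simultaneously for the countably many $f_n$; I would handle this by intersecting countably many decompositions, which stays countable.

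The main obstacle is the quantitative step: turning ``each $f\circ\Phi$ is Sobolev'' into an $\Leb^p$ bound on the supremum over infinitely many $f$, uniformly. My first attempt would be a Baire-category argument on the complete metric space of $1$-Lipschitz functions on $\Y$ with pointwise convergence on the dense set. The sets
\[
\big\{f : \|\,|\mathbf{D}(f\circ\Phi)|_p\|_{\Leb^p}\le M\big\}
\]
are closed by lower semicontinuity, so one of them contains a ball. I would then combine this with dimension-based counting to pass from single functions to suprema, using finite Hausdorff dimension to ensure that finitely many ``directions'' suffice at each scale.
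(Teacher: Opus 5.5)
\textbf{There is a genuine gap.} Your framework is right, and it matches the paper's up to that point. The closed graph theorem gives only $\sup_{f\in\LIP_1(\Y)}\En_p[f\circ\Phi](\X)<+\infty$. What is actually needed is a bound on $\En_p[F\circ\Phi](\X)$ for $1$-Lipschitz $F\colon\Y\to\R^J_\infty$ that is uniform in $J$. Since $|\mathbf{D}(F\circ\Phi)|_p=\max_j|\mathbf{D}(f^j\circ\Phi)|_p$, such a bound over finite families passes to the essential supremum.

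At exactly this point your proposal has no working mechanism. The premise you plan to use in case II) is false: finite Hausdorff dimension gives no control of covering numbers at any fixed scale. For example, a countable compact subset of $\ell^2$ has Hausdorff dimension $0$ and can need $e^{1/\delta}$ balls of radius $\delta$. So polynomial nets and a summable scale sum are not available, and your Baire-category paragraph only reproduces the single-function bound. Two ideas are missing.

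First, the closed-graph bound must be promoted to a finite Borel measure on $\Y$,
\[
\Theta_{\Phi,p}(Q)=\sup_{f\in\LIP_1(\Y)}\En_p[f\circ\Phi]\big(\Phi^{-1}(Q)\big).
\]
Its countable additivity requires gluing different $f$'s on disjoint pieces. This is what lets estimates obtained on countably many pieces be summed; the scalar bound applied piece by piece does not sum.

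Second, you need a chart theorem that controls almost every curve, not merely a low-distortion map built from a partition of unity. The statement is: for a Borel set $Q$ with $\dim_{\mathrm{H}}(Q)\le d$ and a locally finite measure $\nu$, there are disjoint Borel sets $Q_i$ covering $\nu$-a.e.\ point of $Q$, and $1$-Lipschitz maps $H^i$ into $\R^d_\infty$, such that $\ms_{\beta}\le(1+\epsilon)\ms_{H^i\circ\beta}$ a.e.\ on $\beta^{-1}(Q_i)$ for $\Mod_\infty^{\nu}$-a.e.\ curve $\beta$, with the latter speed computed in $\R^d_1$.

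Here is how the two combine. Take $\nu=\Lambda_{\Psi,p}$, where $\Psi\in\mathrm{D}^{1,p}(\X,\R^J_\infty)$ is a representative of $F\circ\Phi$. Use that $\Mod_\infty^{\Lambda_{\Psi,p}}$-null curve families pull back to $\Mod_p$-null families. This gives $|\mathbf{D}\Psi|_p\le(1+\epsilon)\sum_{l\le d}|\mathbf{D}(h^{i,l}\circ\Psi)|_p$ on $\Psi^{-1}(Q_i)$, hence
\[
\En_p[\Psi]\big(\Psi^{-1}(Q_i)\big)\le(1+\epsilon)^p d^p\,\Theta_{\Psi,p}(Q_i).
\]
Summing over $i$ yields $\En_p[F\circ\Phi](\X)\le d^p\,\Theta_{\Phi,p}(\Y)$, which is independent of $J$.

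Your case I) reduction also breaks down. Intersecting the Luzin--Lipschitz decompositions of countably many $f_n\circ\Phi$ produces pieces indexed by $\Nat^{\Nat}$, which is uncountable. Even on a common piece, the Lipschitz constants of the $f_n\circ\Phi$ need not be uniform in $n$, so $\Phi$ itself need not be Lipschitz there. Obtaining that uniformity is essentially the quantitative statement you are trying to prove.

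The fix is not to make $\Phi$ Luzin--Lipschitz. Instead, apply the Luzin--Lipschitz property to each $F\circ\Phi$ with $F\colon\Y\to\R^J_\infty$; this map is Sobolev because the target is finite-dimensional. Then $\Lambda_{F\circ\Phi,p}$ is carried by $\bigcup_i F\big(\Phi(K_i)\big)\subseteq\R^J_\infty$, a set of Hausdorff dimension at most $\dim_{\mathrm{H}}(\X)$. The chart theorem is applied in $\R^J_\infty$ to this image set. In case II) the carrier is simply $F(\Y)$. Working in $\R^J_\infty$ rather than in $\Y$ treats both cases uniformly. It also avoids needing completeness of $\Y$, which the statement does not assume but the chart theorem requires.
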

\noindent We separately draw the reader's attention to the fact that the listed conditions are importantly not required to hold simultaneously.

Let us make the following comments on the theorem above. It is a well-known feature of Sobolev functions, and even of functions of bounded variation, on Euclidean domains (understood in the classical sense) that such functions indeed have the Luzin--Lipschitz property, and therefore all such domains satisfy the introduced Sobolev-to-Luzin--Lipschitz property. Since such domains are moreover of finite Hausdorff dimension in the obvious way, we immediately obtain that our result encompasses this class of source spaces, which is already a complete novelty in dimensions higher than two. Much more generally, if the source metric measure space admits certain versions of the so-called doubling and Poincar{\'e} conditions, then it is known to satisfy both the corresponding assumptions as well; for more on this, we refer the reader, for instance, to \cite[Section 3.3, Section 8.1, Section 8.2]{HKST15}. Since these two conditions are reputed to be enjoyed by a large class of geometrically interesting spaces, it is fair to say that this portion of our result is remarkably universal from the perspective of modern geometric analysis. At the same time, the other incarnation of our result, i.e., the one that deals with additional assumptions on the target space, is in turn closer in spirit to the earlier-discussed situation with maps valued in finite-dimensional Banach spaces (or in metric spaces possessing one of the embeddability characteristics invoked there) and forms therefore its far-reaching extension. Thus, what we observe here is that two seemingly independent effects (which, however, can in fact be linked, through a specific notion introduced in \cref{ss:FunCl}) force a given weakly Sobolev map to get automatically ``regularized'' up to being genuinely Sobolev. And in our current opinion, yet somewhat unjustified, this is essentially the best one can hope to achieve in this direction.

As a clarification for the reader, our proof of \cref{theo:MainRes} is based on two core aspects presented in the previous literature. The first aspect is embodied by the main result of the earlier-addressed work \cite{O26.2} by the author of the current manuscript, which, as is already announced, should be viewed exactly through the lens of the touched BLD-embeddability characteristic of metric spaces. In a nutshell, it establishes the existence, on any ``reasonable'' metric space of finite Hausdorff dimension once equipped with any ``reasonable'' measure, of a countable collection of special charts covering the whole space that are represented by certain vector-valued Lipschitz functions on the space being also ``uniformly nondegenerate'' in some specific sense. The necessary details in application to our discussion will be provided within \cref{theo:BLD}. The second aspect is adopted from the previously-mentioned work \cite{P26} by S. V. Pavlov. Its essence, from the perspective of our problem, can be summarized in the following strategy: consider all the post-compositions of a given map with all possible finite-dimensional vector-valued $1$-Lipschitz functions and prove that the energies (the meaning of which will be disclosed in \cref{ss:FunCl}) of these post-compositions are equi-bounded. The corresponding reasoning will be in use within \cref{theo:GenTheo}. All the rest, while being rather useful on its own and equally quite cumbersome, can be considered as a more technical part.
 
We conclude here with an important note. The idea to generalize the $1$-dimensional result by Oleinik--Tyulenev to a broader context was there right from the start and remained in the air for a long period of time. At some subsequent moments, both these authors were in a collective process of contemplating this subject, yet they stuck to somewhat distinct methods on their path, which led to more autonomous attempts of each one to reach the goal. In the middle of August 2026, the author of the current manuscript was informed by A. I. Tyulenev that the latter obtained a certain result in the relevant direction, which initiated the ensuing paper \cite{T26}. In contrast, the backbone of the analysis developed here took its final shape in an unrelated way at the end of August 2026, after a prolonged reflection on the corresponding matters and particularly in the wake of a talk by S. V. Pavlov exactly on his very work watched by the author of the current manuscript and followed by an insightful joint discussion on the topic; all this was then also communicated to A. I. Tyulenev. And while some pieces of the final results within each of these papers visually overlap, the two should be noted to be entirely different at a conceptual level, as well as to employ substantially different techniques in their core. Thus, this study is declared to be independent of the one by A. I. Tyulenev to the full extent.

\section{Preliminary information}\label{ss:PrInf}
Here we introduce some notation and terminology of a general kind necessary for our subsequent exposition.

We will adopt the following usual “indeterminacy” conventions: $(+\infty)\cdot 0=0\cdot (+\infty)=0$.

Given a set $S$, by $\mathcal{P}(S)$, we will denote the power set of $S$.

Given sets $S_0,S,S'$ with $S\subseteq S_0$ and a map $\kappa\colon S_0\to S'$, by $\kappa|_{S}$, we will mean the function $S\to S'$ obtained by taking the restriction of $\kappa$ to $S$.

Given a set $S$, by $\1_S$, we will mean ``the indicator function'' of $S$, which becomes a real function whenever the ambient set is understood.

By $\R$, $\overline{\R}_+$, and $\R_+$, we will denote the sets, respectively, of the reals, of the extended nonnegative reals, and of the nonnegative reals. All these sets will be assumed equipped with their usual topologies, and the set $\R$ will be also considered as a metric space under its usual metric.

By $\Nat$ and $\Nat_0$, we will denote the sets, respectively, of the positive integers and of the nonnegative integers.

Given $i,i'\in \Nat_0$, we put
\begin{equation}
    \overline{i,i'}\coloneqq [i,i']\cap \Nat_0.
\end{equation}

By $\mathcal{L}^1$, we will mean the standard Lebesgue measure on $\R$.

Let $M\in \Nat_0$. By $\R^M$, we will denote the $M$-dimensional real coordinate space. By $\R^M_{\infty}$ and $\R^M_1$, we will denote the metric spaces obtained by endowing $\R^M$ with, respectively, the $\infty$-norm and the $1$-norm. Here, we warn the reader about the following: sometimes a given map to $\R^M$ will need to be considered both as a map to $\R^M_{\infty}$ and as a map to $\R^M_1$, which obviously might be of some difference at the level of metric properties of the map; such ``changes of the target'' will always be declared explicitly to avoid any confusions.

Now we recall certain concepts from the analysis on metric spaces. So, let $\Z=(\Z,\dZ)$ be a metric space.

Let $S\subseteq \Z$. By $\dim_{\mathrm{H}}(S)$, we will denote the Hausdorff dimension of $S$. In what follows, we will tacitly exploit the following well-known facts about this notion: Lipschitz maps between metric spaces do not increase the Hausdorff dimension of sets; any subset of a metric space is contained in a Borel set therein of the same Hausdorff dimension. Also, we put
\begin{equation}
    \overline{\dim}_{\mathrm{H}}(S)\coloneqq \inf\Big\{d\in \Nat_0\bigm|\dim_{\mathrm{H}}(S)\leq d\Big\};
\end{equation}
such a quantity, which, whenever finite, is always a nonnegative integer and thereby will sometimes be more convenient for us than the ordinary Hausdorff dimension, can be informally called the upper Hausdorff dimension of a given set. With this, we immediately notice that
\begin{equation}
    \dim_{\mathrm{H}}(S)\leq \overline{\dim}_{\mathrm{H}}(S)\leq \dim_{\mathrm{H}}(S)+1,
\end{equation}
the fact of which should be kept in mind everywhere further.

By $\overline{\mathrm{B}}_+(\Z)$, we will mean the collection of all Borel functions $\Z\to \overline{\R}_+$. In turn, by $\mathrm{B}_+(\Z)$, we will mean the subcollection of $\overline{\mathrm{B}}_+(\Z)$ made of all $\rho\in \overline{\mathrm{B}}_+(\Z)$ such that $\rho(z)\in \R_+$ for every $z\in \Z$.

By a Borel measure on $\Z$, we will mean an outer measure $\zeta$ on $\Z$ such that any Borel set $Q\subseteq \Z$ is $\zeta$-measurable. By a Borel-regular measure on $\Z$, we will mean a Borel measure $\zeta$ on $\Z$ such that, for any $S\subseteq \Z$, there is a Borel set $Q\subseteq \Z$ with $S\subseteq Q$ and $\zeta(Q)=\zeta(S)$.

By a curve in $\Z$, we will mean a continuous map from a compact segment of $\R$ to $\Z$. Given a curve $\alpha$ in $\Z$, by $\dom(\alpha)$, we will denote its domain. Given a curve $\alpha$ in $\Z$, by $\Len(\alpha)$, we will denote its length. By $\Cur_{\mathrm{r}}(\Z)$, $\Cur_{\mathrm{ac}}(\Z)$, and $\Cur_{\mathrm{L}}(\Z)$, we will denote the families, respectively, of all rectifiable curves, of all absolutely continuous curves, and of all Lipschitz curves in $\Z$. In particular, we have the chain
\begin{equation}
    \Cur_{\mathrm{L}}(\Z)\subseteq \Cur_{\mathrm{ac}}(\Z)\subseteq \Cur_{\mathrm{r}}(\Z).
\end{equation}
For the details on these notions, we refer the reader to \cite[Section 5.1]{HKST15}.

Let us recall the following parameterization-invariance property of rectifiable curves: given $\alpha\in \Cur_{\mathrm{r}}(\Z)$, $a,b\in \R$ with $a\leq b$, and a nondecreasing, continuous function $\sigma\colon [a,b]\to \R$ with $\sigma\big([a,b]\big)\subseteq \dom(\alpha)$, we have that $\alpha\circ \sigma\in \Cur_{\mathrm{r}}(\Z)$.

We now recall the notion of metric speed of rectifiable curves. Let $\alpha\in \Cur_{\mathrm{r}}(\Z)$. By the metric speed of $\alpha$, we will mean the function $\ms_{\alpha}\colon \dom(\alpha)\to \R_+$ given for any $t\in \dom(\alpha)$ by
\begin{equation}\label{eq:MetrSp}
    \ms_{\alpha}(t)\coloneqq \begin{dcases}
        \lim\limits_{\substack{t'\to t\\t'\in \dom(\alpha)}} \frac{\dZ\big(\alpha(t),\alpha(t')\big)}{|t'-t|},\qquad\parbox{20em}{\center{the point $t$ is a limit point of $\dom(\alpha)$\\ and the limit exists in $\mathbb{R}_+$,}}\\
    0,\qquad\qquad\qquad\qquad\qquad\qquad\quad\;\;\;\text{otherwise}.
    \end{dcases}
\end{equation}
It is well known that this function is Lebesgue-measurable and that, for $\mathcal{L}^1$-a.e. $t\in \dom(\alpha)$, exactly the first alternative in \eqref{eq:MetrSp} is applied. These facts can be found in \cite[Theorem 4.4.8]{HKST15}.

Let $a,b\in \R$ be with $a\leq b$, let $\sigma\colon [a,b]\to \R$ be a nondecreasing function. By the derivative of $\sigma$, we will mean the function $\partial \sigma\colon [a,b]\to \R_+$ given for any $t\in [a,b]$ by
\begin{equation}\label{eq:MonFuncDer}
    (\partial \sigma)(t) \coloneqq \begin{dcases}
        \lim\limits_{\substack{t'\to t\\t'\in [a,b]}} \frac{\sigma(t')-\sigma(t)}{t'-t},\qquad \parbox{20em}{\center{the point $t$ is a limit point of $[a,b]$\\ and the limit exists in $\mathbb{R}_+$,}}\\
0,\qquad\qquad\qquad\qquad\qquad\quad\;\;\;\;\text{otherwise}.
    \end{dcases}
\end{equation}
It is a folklore fact that this function is Lebesgue-measurable and that, for $\mathcal{L}^1$-a.e. $t\in [a,b]$, exactly the first alternative in \eqref{eq:MonFuncDer} is applied. 

The following chain rule for the metric speed of curves is also well known.
\begin{prop}\label{prop:ChainRulCur}
    Let $\alpha \in \Cur_{\mathrm{L}}(\Z)$, let $a,b\in \R$ be with $a\leq b$, let $\sigma\colon [a,b]\to \R$ be a nondecreasing, continuous function with $\sigma\big([a,b]\big)\subseteq \dom(\alpha)$. Then the following properties hold:
    \begin{gather}
    \begin{gathered}
    (\partial \sigma)(s)=\ms_{\alpha\circ \sigma}(s)=0 \qquad \text{for $\mathcal{L}^1$-a.e. $s\in \sigma^{-1}(R)$}\\
    \text{for any Borel set $R\subseteq \R$ with $\mathcal{L}^1(R)=0$};
    \end{gathered}\\
        \ms_{\alpha\circ \sigma}(s)=\ms_{\alpha}\big(\sigma(s)\big)(\partial \sigma)(s)\qquad \text{for $\mathcal{L}^1$-a.e. $s\in [a,b]$}.
    \end{gather}
    \begin{proof}
        As the statement is standard, we omit the details.
    \end{proof}
\end{prop}

We prefer to explicitly state the fact about the existence of the natural parameterization of rectifiable curves.
\begin{prop}\label{prop:NatParam}
    Let $\alpha\in \Cur_{\mathrm{r}}(\Z)$. Then there exists a unique pair consisting of $\alpha_{\mathrm{n}}\in \Cur_{\mathrm{L}}(\Z)$ and of a nondecreasing, continuous function $\sigma_{\alpha}\colon \dom(\alpha)\to\R$ such that the following properties hold:
    \begin{gather}
\sigma_{\alpha}\big(\dom(\alpha)\big)=\dom(\alpha_{\mathrm{n}})=\big[0,\Len(\alpha)\big];\\
        \alpha=\alpha_{\mathrm{n}}\circ \sigma_{\alpha};\\
        \ms_{\alpha_{\mathrm{n}}}(s)=1\qquad \text{for $\mathcal{L}^1$-a.e. $s\in \dom(\alpha_{\mathrm{n}})$}.
    \end{gather}
    \begin{proof}
        The existence is established in \cite[Section 5.1]{HKST15}. The uniqueness, in turn, can be verified routinely on the basis of the stated properties.
    \end{proof}
\end{prop}

We now recall the notion of line integration. Given $\alpha\in \Cur_{\mathrm{r}}(\Z)$ and $\rho\in \overline{\mathrm{B}}_+(\Z)$, by the line integral of $\rho$ over $\alpha$, we will mean the quantity
\begin{equation}
    \int\limits_{\alpha} \rho\coloneqq \int\limits_{\dom(\alpha_{\mathrm{n}})} \rho\big(\alpha_{\mathrm{n}}(s)\big)\D \mathcal{L}^1(s).
\end{equation}
The following parametrization-invariance property of the line integral is well known. Given $\alpha\in \Cur_{\mathrm{ac}}(\Z)$ and $\rho\in \overline{\mathrm{B}}_+(\Z)$, one has
\begin{equation}\label{eq:ParInvLinInt}
    \int\limits_{\alpha} \rho=\int\limits_{\dom(\alpha)} \rho\big(\alpha(t)\big) \ms_{\alpha}(t)\D \mathcal{L}^1(t).
\end{equation}
For the necessary details on this matter, we send the reader to \cite[Section 5.1]{HKST15}.

The following notion will be of use later. Given $\mathit{A}\subseteq \Cur_{\mathrm{r}}(\Z)$, a function $\rho\in \overline{\mathrm{B}}_+(\Z)$ will be called admissible for $\mathit{A}$ if
\begin{equation}
    \int\limits_{\alpha}\rho\geq 1\qquad \text{for any $\alpha\in \mathit{A}$}.
\end{equation}

Let $\Y=(\Y,\dY)$ be a metric space. By $\LIP(\Y,\Z)$, we will denote the collection of all Lipschitz maps $\Y\to \Z$. By $\LIP_1(\Y,\Z)$, we will denote the collection of all $1$-Lipschitz maps $\Y\to \Z$. For brevity, we also put $\LIP(\Y)\coloneqq \LIP(\Y,\R)$ and $\LIP_1(\Y)\coloneqq \LIP_1(\Y,\R)$. Let us note that the collection $\LIP(\Y)$ forms a vector space under the standard operations.

Let $\Y=(\Y,\dY)$ be a metric space, let $\nu$ be a Borel measure on $\Y$. By the $\infty$-modulus on $\Y$ with respect to $\nu$, we will mean the set function $\Mod_{\infty}^{\nu}\colon \mathcal{P}\big(\Cur_{\mathrm{r}}(\Y)\big)\to \overline{\R}_+$ given for any $\mathit{B}\subseteq \Cur_{\mathrm{r}}(\Y)$ by
\begin{equation}
    \Mod_{\infty}^{\nu}(\mathit{B})\coloneqq \inf\limits_{\rho} \esup\limits_{\nu} (\rho),
\end{equation}
where the infimum is taken over all functions $\rho\in \overline{\mathrm{B}}_+(\Y)$ admissible for $\mathit{B}$. This set function is known to be an outer measure on $\Cur_{\mathrm{r}}(\Y)$. For more on this notion, see \cite{AHPCS18}.

The following simple property will be used later.
\begin{prop}\label{prop:ModEst}
    Let $\Y=(\Y,\dY)$ be a metric space, let $\nu$ be a Borel measure on $\Y$, let $\mathit{B}\subseteq \Cur_{\mathrm{r}}(\Y)$. Then one has
    \begin{equation}
        \Mod_{\infty}^{\nu}\bigg(\Big\{\beta\in \Cur_{\mathrm{r}}(\Y)\bigm| \beta_{\mathrm{n}}\in \mathit{B}\Big\}\bigg)\leq \Mod_{\infty}^{\nu}(\mathit{B}).
    \end{equation}
    \begin{proof}
        The statement can be derived directly from the relevant definitions.
    \end{proof}
\end{prop}

By a metric measure space, we will mean a triple $(\X,\dX,\mX)$ where $(\X,\dX)$ is a separable, locally complete metric space and $\mX$ is a locally finite Borel-regular measure thereon.

For the rest of the section, let $\X=(\X,\dX,\mX)$ be a fixed metric measure space.

Let $S$ be a set, let $\kappa\colon \X\to S$ be a map. By an $\mX$-representative of $\kappa$, we will mean any map $\kappa'\colon \X\to S$ such that
\begin{equation}
    \kappa'(x)=\kappa(x)\qquad \text{for $\mX$-a.e. $x\in \X$}.
\end{equation}
By $[\kappa]_{\mX}$, we will denote the set of all $\mX$-representatives of $\kappa$. Also, slightly abusing the terminology, we will speak about $\mX$-representatives of $[\kappa]_{\mX}$, by which we will clearly mean any element of $[\kappa]_{\mX}$. Moreover, in the usual way, we will treat the set $[\kappa]_{\mX}$ as an $\mX$-a.e. defined function on $\X$ valued in $S$.

Let $\mathsf{T}$ be a topological space. A map $\Phi\colon \X\to \mathsf{T}$ will be called $\mX$-measurable if the set $\Phi^{-1}(Q)$ is $\mX$-measurable for any Borel set $Q\subseteq \mathsf{T}$. By $\mathrm{M}(\X,\mathsf{T})$, we will mean the collection of all $\mX$-measurable maps $\X\to \mathsf{T}$. For brevity, we also put $\mathrm{M}(\X)\coloneqq \mathrm{M}(\X,\R)$. We note separately that there is no identification up to $\mX$-a.e. equality here.

We now introduce the following collections:
\begin{gather}
    \Leb(\X)\coloneqq \Big\{[u]_{\mX}\bigm| u\in \mathrm{M}(\X)\Big\};\\
    \overline{\Leb}_+(\X)\coloneqq \Big\{[u]_{\mX}\bigm| u\in \mathrm{M}(\X,\overline{\R}_+)\Big\}.
\end{gather}
It should be also noted that the collection $\Leb(\X)$ forms in the obvious way a vector space under the standard operations. Next, we let $\Leb_+(\X)$ be the subcollection of $\overline{\Leb}_+(\X)$ made of all $v\in \overline{\Leb}_+(\X)$ such that $v(x)\in \R_+$ for $\mX$-a.e. $x\in \X$. In addition, given $p\in [1,+\infty)$, by $\Leb_+^p(\X)$, we will mean the collection of all $v\in \Leb_+(\X)$ with
\begin{equation}
    \int\limits_{\X} v^p\D\mX<+\infty.
\end{equation}

In the sequel, we will exploit the following well-known fact:
\begin{equation}\label{eq:BorRepr}
   \text{given $\varrho\in \Leb_+(\X)$, there is $\rho\in \mathrm{B}_+(\X)$ with $\varrho=[\rho]_{\mX}$}.
\end{equation}
Its proof is provided in \cite[Proposition 3.3.23]{HKST15}.

At some moment later, we will need the standard concepts of join and meet of an arbitrary family of (equivalence classes up to a.e. equality of) nonnegative measurable functions. The corresponding existence statement, for the join specifically, we formulate below.
\begin{prop}
    Let $\mathscr{V}\subseteq \overline{\Leb}_+(\X)$ be a nonempty set. Then there exists a unique element $v^{\vee}\in \overline{\Leb}_+(\X)$, which will be called the join of $\mathscr{V}$ and which will be denoted by
    \begin{equation}
        \bigvee\limits_{v\in \mathscr{V}} v,
    \end{equation}
    satisfying the following properties:
    \begin{enumerate}
        \item[$\rm I)$] it holds, for any $v\in \mathscr{V}$, that
        \begin{equation}
            v(x)\leq v^{\vee}(x)\qquad \text{for $\mX$-a.e. $x\in \X$};
        \end{equation}
        \item[$\rm II)$] given any $v'\in \overline{\Leb}_+(\X)$ such that it holds, for any $v\in \mathscr{V}$, that
        \begin{equation}
            v(x)\leq v'(x)\qquad \text{for $\mX$-a.e. $x\in \X$},
        \end{equation}
        one has
        \begin{equation}
            v^{\vee}(x)\leq v'(x)\qquad \text{for $\mX$-a.e. $x\in \X$}.
        \end{equation}
    \end{enumerate}
    Moreover, there exists $(v_n)_{n\in \Nat}\subseteq \mathscr{V}$ such that
    \begin{equation}
        v^{\vee}(x)=\sup\limits_{n\in \Nat} v_n(x)\qquad \text{for $\mX$-a.e. $x\in \X$}.
    \end{equation}
    \begin{proof}
        We omit the routine details, as the proof is classical.
    \end{proof}
\end{prop}
\noindent To be explicit, given $\mathscr{V}\subseteq \overline{\Leb}_+(\X)$, its meet, whose properties can be determined by dualizing those given above, will be denoted by
\begin{equation}
    \bigwedge\limits_{v\in \mathscr{V}} v.
\end{equation}

\begin{rem}
    We also point out the following easy consequence of the proposition above: whenever the family $\mathscr{V}$ from the proposition is countable, its join and meet are nothing but, respectively, the pointwise supremum and infimum of the elements from $\mathscr{V}$. This will be tacitly used later on.
\end{rem}

Now we introduce the finite-exponent moduli. Given $p\in [1,+\infty)$, by the $p$-modulus on $\X$, we will mean the set function $\Mod_p^{\mX}\colon \mathcal{P}\big(\Cur_{\mathrm{r}}(\X)\big)\to \overline{\R}_+$ given for any $\G\subseteq \Cur_{\mathrm{r}}(\X)$ by
\begin{equation}
    \Mod_p^{\mX}(\G)\coloneqq \inf\limits_{\rho} \int\limits_{\X} \rho^p \D \mX,
\end{equation}
where the infimum is taken over all functions $\rho\in \overline{\mathrm{B}}_+(\X)$ admissible for $\G$. Such set functions are also known to be outer measures on $\Cur_{\mathrm{r}}(\X)$. We refer the reader to \cite[Section 5.2]{HKST15} for more on this.

We record the following standard property of the moduli.
\begin{prop}\label{prop:ModZeroNegSet}
    Let $E\subseteq\X$ be an $\mX$-measurable set with $\mX(E)=0$. Then it holds, for $\Mod_p^{\mX}$-a.e. $\gamma\in \Cur_{\mathrm{r}}(\X)$, that
\begin{equation}
    \mathcal{L}^1\big(\gamma_{\mathrm{n}}^{-1}(E)\big)=0.
\end{equation}
\begin{proof}
    The statement follows from \cite[Lemma 5.2.15]{HKST15}.
\end{proof}
\end{prop}

Given a metric space $\Y=(\Y,\dY)$, a map $\Phi\colon \X\to \Y$ will be called $\mX$-essentially separably valued if, for some $\mX$-measurable set $E\subseteq \X$ with $\mX(E)=0$, the image $\Phi\big(\X\backslash E\big)$ is separable as a subspace of $\Y$. We then note immediately in this regard that any map from $\X$ to a separable metric space is automatically $\mX$-essentially separably valued, the fact of which should be kept in mind later on.

The following folklore fact allows one to derive the measurability of a map from the measurability of its post-compositions with Lipschitz functions.
\begin{theo}\label{theo:Pettis}
    Let $\Y=(\Y,\dY)$ be a metric space, let $\Phi\colon \X\to \Y$ be an $\mX$-essentially separably valued map such that
    \begin{equation}
        f\circ \Phi\in \mathrm{M}(\X)\qquad \text{for any $f\in \LIP(\Y)$}.
    \end{equation}
    Then one has
    \begin{equation}
    \Phi\in \mathrm{M}(\X,\Y).
    \end{equation}
    \begin{proof}
        The statement stems quite easily from the Pettis measurability theorem, which can be found in \cite[Section 3.1]{HKST15}. Some explanations are given also in \cite[Lemma 2.3]{OT24}.
    \end{proof}
\end{theo}

\section{Functional classes}\label{ss:FunCl}
Here we discuss certain aspects of the abstract Sobolev theory necessary for our presentation.

In this section, a metric measure space $\X=(\X,\dX,\mX)$ is assumed fixed.

We start with the following auxiliary class of maps. It can be viewed as a singular analog of the class of functions absolutely continuous on lines from the classical Sobolev calculus; see \cite[Section 6.3]{HKST15} for more on this.
\begin{defi}\label{def:ACC}
Let $\Y=(\Y,\dY)$ be a metric space, let $p\in [1,+\infty)$. By $\mathrm{ACC}_p(\X,\Y)$, the space of maps absolutely continuous on $p$-a.e. curve, we will mean the collection of all maps $\Phi\colon \X\to \Y$ such that it holds, for $\Mod^{\mX}_p$-a.e. $\gamma\in \Cur_{\mathrm{r}}(\X)$, that $\Phi\circ \gamma_{\mathrm{n}}\in \Cur_{\mathrm{ac}}(\Y)$.
\end{defi}

\begin{rem}
    As a warning for the reader, we note that the above-introduced class is not stable under changes of maps on sets of zero measure. More explicitly, even if some map belongs to this class, it is not the case in general that its given representative does.
\end{rem}

Below we recall the classical concept of weak upper gradient of a map, in the Newtonian sense specifically; see \cite[Section 6.2]{HKST15} for the details. We exploit a slightly nonstandard approach to it, more convenient for our narrative, but both are in fact completely equivalent as verified in \cite[Proposition 6.3.3]{HKST15}.
\begin{defi}
Let $\Y=(\Y,\dY)$ be a metric space, let $p\in [1,+\infty)$. Let $\Phi\in \mathrm{ACC}_p(\X,\Y)$. By a $p$-weak upper gradient of $\Phi$, we will mean any function $G\in \overline{\mathrm{B}}_+(\X)$ such that it holds, for $\Mod_p^{\mX}$-a.e. $\gamma\in \Cur_{\mathrm{r}}(\X)$ with $\Phi\circ \gamma_{\mathrm{n}}\in \Cur_{\mathrm{ac}}(\Y)$, that
\begin{equation}
    \ms_{\Phi\circ \gamma_{\mathrm{n}}}(s)\leq G\big(\gamma_{\mathrm{n}}(s)\big)\qquad \text{for $\mathcal{L}^1$-a.e. $s\in \dom(\gamma_{\mathrm{n}})$}.
\end{equation}
\end{defi}

Now we define the class of Dirichlet--Sobolev maps, which will play a crucial role in what follows. We send the reader to \cite[Section 7.1]{HKST15} for a thorough exposition of the topic. Let us also note here that the definition below has nothing to do with the post-composition approach and, in a sense, is of more direct and intrinsic nature than it.
\begin{defi}
Let $\Y=(\Y,\dY)$ be a metric space, let $p\in [1,+\infty)$. By $\mathrm{D}^{1,p}(\X,\Y)$, the homogeneous space of $p$-Dirichlet--Sobolev maps, we will mean the collection of all maps $\Phi\in \mathrm{M}(\X,\Y)\cap \mathrm{ACC}_p(\X,\Y)$ for which there exists a $p$-weak upper gradient $G$ with $[G]_{\mX}\in \Leb_+^p(\X)$.
\end{defi}
\noindent The word ``homogeneous'' is added here, as is already mentioned in \cref{ss:Intr}, in order to highlight that there are no a priori integrability assumptions on maps in the class. The same very principle will be adopted everywhere further as well.

\begin{rem}
    Again, as with \cref{def:ACC}, the above-introduced class is sensitive to changes of maps on sets of zero measure. For more information on this, see \cite[Section 7]{HKST15}.
\end{rem}

For the sake of brevity, for any $p\in [1,+\infty)$, let us also put
\begin{equation}
    \mathrm{D}^{1,p}(\X)\coloneqq \mathrm{D}^{1,p}(\X,\R).
\end{equation}

We record the following simple property of the Dirichlet--Sobolev maps. Let $\Y=(\Y,\dY)$ be a metric space, let $p\in [1,+\infty)$, let $\Phi\colon \X\to \Y$ be a map, let $\Z=(\Z,\dZ)$ be a metric space, let $F\in \LIP(\Y,\Z)$. Then the following implication holds:
\begin{equation}\label{eq:DirLipCl}
    \Phi\in \mathrm{D}^{1,p}(\X,\Y)\qquad \Rightarrow \qquad F\circ \Phi\in \mathrm{D}^{1,p}(\X,\Z).
\end{equation}

We now proceed with the definition of minimal weak upper gradient, which can be viewed as the optimal weak upper gradient. Again, we refer the reader to \cite[Section 6.2]{HKST15} for necessary details.
\begin{defi}
Let $\Y=(\Y,\dY)$ be a metric space, let $p\in [1,+\infty)$, let $\Phi\colon \X\to \Y$ be a map that admits an $\mX$-representative $\Phi'\in \mathrm{D}^{1,p}(\X,\Y)$. By the minimal $p$-weak upper gradient of $\Phi$, we will mean the element $|\mathbf{D} \Phi|_p\in \Leb^p_+(\X)$ given by
\begin{equation}
    |\mathbf{D} \Phi|_p\coloneqq \bigwedge\limits_{G} [G]_{\mX},
\end{equation}
where the meet is taken over all $p$-weak upper gradients $G$ of $\Phi'$.
\end{defi}
\noindent The given definition can be checked to be well-posed, i.e., the element $|\mathbf{D} \Phi|_p$ does not depend on the choice of $\Phi'$, as is shown in \cite[Proposition 6.3.22]{HKST15}.

We point out the following consequence of the definition above and of properties of the meet: in the terms used in the definition, given an $\mX$-representative $G\in \mathrm{B}_+(\X)$ of $|\mathbf{D} \Phi|_p$, it holds, for $\Mod_p^{\mX}$-a.e. $\gamma\in \Cur_{\mathrm{r}}(\X)$ with $\Phi'\circ \gamma_{\mathrm{n}}\in \Cur_{\mathrm{ac}}(\Y)$, that
\begin{equation}\label{eq:MinUpGrUpGr}
    \ms_{\Phi'\circ \gamma_{\mathrm{n}}}(s)\leq G\big(\gamma_{\mathrm{n}}(s)\big)\qquad \text{for $\mathcal{L}^1$-a.e. $s\in \dom(\gamma_{\mathrm{n}})$}.
\end{equation}
In other words, any Borel representative of the minimal weak upper gradient is itself a weak upper gradient.

We will need the following simple fact about minimal weak upper gradients of maps to finite-dimensional vector spaces.
\begin{prop}\label{prop:FinDimDir}
    Let $p\in [1,+\infty)$, $M\in \Nat$, $\psi^1,...,\psi^M\in \mathrm{D}^{1,p}(\X)$. Then the following assertions are valid:
    \begin{enumerate}
    \item[$\rm I)$] letting $\Psi_{\infty}\colon \X\to \R^M_{\infty}$ be the map given by the components $\psi^1,...,\psi^M$, it holds that $\Psi_{\infty}\in \mathrm{D}^{1,p}(\X,\R^M_{\infty})$ with
        \begin{equation}
            \big|\mathbf{D} \Psi_{\infty}\big|_p(x)=\sup\limits_{m\in \overline{1,M}} \big|\mathbf{D} \psi^m\big|_p(x)\qquad \text{for $\mX$-a.e. $x\in \X$};
            \end{equation}
        \item[$\rm II)$] letting $\Psi_1\colon \X\to \R^M_1$ be the map given by the components $\psi^1,...,\psi^M$, it holds that $\Psi_1\in \mathrm{D}^{1,p}(\X,\R^M_1)$ with
        \begin{equation}
            \big|\mathbf{D} \Psi_1\big|_p(x)\leq \sum\limits_{m\in \overline{1,M}} \big|\mathbf{D} \psi^m\big|_p(x)\qquad \text{for $\mX$-a.e. $x\in \X$};
        \end{equation}
            \end{enumerate}
            \begin{proof}
               This statement follows routinely from the relevant definitions and natural properties of the metric speed, so we omit the details.
            \end{proof}
\end{prop}

Now we want to introduce a couple of measures capturing the behavior of a given map. Let $\Y=(\Y,\dY)$ be a metric space, let $p\in [1,+\infty)$, let $\Phi\in \mathrm{M}(\X,\Y)$ be a map that admits an $\mX$-representative $\Phi'\in \mathrm{D}^{1,p}(\X,\Y)$. By the $p$-energy of $\Phi$, we will mean the set function $\En_p[\Phi]\colon \mathcal{P}(\X)\to \R_+$ given for any $S\subseteq \X$ by
\begin{equation}
    \En_p[\Phi](S)\coloneqq \inf\limits_E\int\limits_E \big(|\mathbf{D} \Phi|_p\big)^p\D \mX,
\end{equation}
where the infimum is taken over all $\mX$-measurable sets $E\subseteq \X$ with $S\subseteq E$. Clearly, one then has
\begin{equation}\label{eq:EnDefEq}
    \En_p[\Phi](E)=\int\limits_E \big(|\mathbf{D} \Phi|_p\big)^p\D \mX\qquad \text{for any $\mX$-measurable set $E\subseteq \X$};
\end{equation}
based on this, one can show that the set function $\En_p[\Phi]$ forms a finite Borel-regular measure on $\X$. Next, we consider the set function $\Lambda_{\Phi,p}\colon \mathcal{P}(\Y)\to \R_+$ given for any $S\subseteq \Y$ by
\begin{equation}\label{eq:CoEnDef}
    \Lambda_{\Phi,p}(S)\coloneqq \inf\limits_{Q}\En_p[\Phi]\big(\Phi^{-1}(Q)\big),
\end{equation}
where the infimum is taken over all Borel sets $Q\subseteq \Y$ with $S\subseteq Q$. One then obviously has
\begin{equation}\label{eq:CoEnDefEq}
    \Lambda_{\Phi,p}(Q)=\En_p[\Phi]\big(\Phi^{-1}(Q)\big) \qquad \text{for any Borel set $Q\subseteq \Y$};
\end{equation}
again, this can be used to show that the set function $\Lambda_{\Phi,p}$ forms a finite Borel-regular measure on $\Y$. Thus, the (Borel part of the) measure $\Lambda_{\Phi,p}$ is nothing but the (Borel part of the) push-forward of the measure $\En_p[\Phi]$ under $\Phi$ (in the sense used, for instance, in \cite[Section 3.3]{HKST15}), the fact of which will be exploited later. Finally, we point out that both the measures $\En_p[\Phi]$ and $\Lambda_{\Phi,p}$ are stable under replacements of $\Phi$ with any of its $\mX$-representatives, which will be implicitly used later.

We are now ready to introduce the ``genuine'' class of Sobolev functions. More precisely, as the class of Dirichlet--Sobolev functions is not stable under replacements of representatives, we just create by hand a class that inherently possesses this feature.
\begin{defi}\label{def:SobFunc}
Let $p\in [1,+\infty)$. By $\mathrm{S}^{1,p}(\X)$, the homogeneous space of $p$-Sobolev functions, we will mean the collection of all functions $\phi\colon \X\to \R$ such that
\begin{equation}
    [\phi]_{\mX}\cap \mathrm{D}^{1,p}(\X)\neq \varnothing.
\end{equation}
\end{defi}
\noindent Since Dirichlet--Sobolev functions are $\mX$-measurable by definition and since any $\mX$-representative of an $\mX$-measurable function is clearly $\mX$-measurable, we deduce that Sobolev functions are themselves $\mX$-measurable as well.

\begin{rem}
   We draw the reader's attention to the absence of any identification up to a.e. equality in the above-given definition, i.e., the elements of the corresponding space are still genuine functions rather than equivalence classes.
\end{rem}

On the basis of the idea illustrated in \cref{ss:Intr}, we now introduce the classes of weakly Sobolev and Sobolev maps.
\begin{defi}\label{def:wSobSob}
Let $\Y=(\Y,\dY)$ be a metric space, let $p\in [1,+\infty)$. By $\mathrm{wS}^{1,p}(\X,\Y)$, the homogeneous space of weakly $p$-Sobolev maps, we will mean the collection of all maps $\Phi\colon \X\to \Y$ such that
\begin{equation}
    f\circ \Phi\in  \mathrm{S}^{1,p}(\X)\qquad \text{for any $f\in \LIP(\Y)$}.
\end{equation}
By $\mathrm{S}^{1,p}(\X,\Y)$, the homogeneous space of $p$-Sobolev maps, we will mean the collection of all maps $\Phi\in \mathrm{wS}^{1,p}(\X,\Y)$ such that
\begin{equation}
    \bigvee\limits_{f\in \LIP_1(\Y)} \big|\mathbf{D} (f\circ \Phi)\big|_p \in \Leb_+^p(\X).
\end{equation}
\end{defi}
\noindent Exactly as with \cref{def:SobFunc}, we first observe that the just-introduced classes are stable under replacements of representatives and then point out that there is no identification up to a.e. equality here.

From the previous definitions, we can easily deduce the following: given $p\in [1,+\infty)$, we have
\begin{equation}\label{eq:EqualScSob}
    \mathrm{S}^{1,p}(\X)=\mathrm{wS}^{1,p}(\X,\R)=\mathrm{S}^{1,p}(\X,\R).
\end{equation}
This demonstrates that the slight notation inconsistency that emerged earlier, namely when we introduced the spaces of Sobolev functions and of Sobolev maps independently of each other, is not harmful at all.

Similarly to the property given in \eqref{eq:DirLipCl}, we record the following feature of the introduced classes. Let $\Y=(\Y,\dY)$ be a metric space, let $p\in [1,+\infty)$, let $\Phi\colon \X\to \Y$ be a map, let $\Z=(\Z,\dZ)$ be a metric space, let $F\in \LIP(\Y,\Z)$. Then the following implications hold:
\begin{gather}
    \Phi\in \mathrm{wS}^{1,p}(\X,\Y)\qquad \Rightarrow \qquad F\circ \Phi\in \mathrm{wS}^{1,p}(\X,\Z);\label{eq:wSobLipSt}\\
     \Phi\in \mathrm{S}^{1,p}(\X,\Y)\qquad \Rightarrow \qquad F\circ \Phi\in \mathrm{S}^{1,p}(\X,\Z).\label{eq:SobLipSt}
\end{gather}

As an extension of the property given in \eqref{eq:EqualScSob}, it is possible to verify, via \cref{prop:FinDimDir}, the following simple fact: given $J\in \Nat_0$ and $p\in [1,+\infty)$, one has
\begin{equation}\label{eq:FinDimBanwSobSob}
    \mathrm{wS}^{1,p}(\X,\R^J_{\infty})=\mathrm{S}^{1,p}(\X,\R^J_{\infty}).
\end{equation}

Let us also mention separately the following obvious attribute: given $p\in [1,+\infty)$, $\phi\in \mathrm{S}^{1,p}(\X)$, and $c\in \R$, one has
\begin{equation}
    \phi+c\1_\X\in \mathrm{S}^{1,p}(\X)
\end{equation}
with
\begin{equation}\label{eq:InvConstAdd}
    \big|\mathbf{D}(\phi+c\1_\X)\big|_p(x)=|\mathbf{D} \phi|_p(x)\qquad \text{for $\mX$-a.e. $x\in \X$}.
\end{equation}

We now define a geometric characteristic of metric-valued maps that will play the central role in our proof strategy. In a nutshell, it detects the maximal dimension of pieces of a map visible by finite-dimensional vector-valued Lipschitz functions.
\begin{defi}\label{defi:ObsDimMap}
Let $\Y=(\Y,\dY)$ be a metric space, let $p\in [1,+\infty)$, let $\Phi\in \mathrm{wS}^{1,p}(\X,\Y)$. By the $p$-observable Hausdorff dimension of $\Phi$ and the upper $p$-observable Hausdorff dimension of $\Phi$, we will mean, respectively, the quantity
\begin{equation}
    \dim^p_{\mathrm{oH}}(\Phi)\coloneqq \sup\limits_{J,F}\inf\limits_{Q} \dim_{\mathrm{H}}(Q)
\end{equation}
and the quantity
\begin{equation}
    \overline{\dim}^p_{\mathrm{oH}}(\Phi)\coloneqq \sup\limits_{J,F}\inf\limits_{Q} \overline{\dim}_{\mathrm{H}}(Q),
\end{equation}
where the suprema are taken over all $J\in \Nat_0$ and all $F\in \LIP(\Y,\R^J_{\infty})$, while the infima are taken over all Borel sets $Q\subseteq \R^J_{\infty}$ with
\begin{equation}
\Lambda_{F\circ \Phi,p}\big(\R^J_{\infty}\backslash Q\big)=0.
\end{equation}
\end{defi}
\noindent Notice that this definition guarantees that the upper observable Hausdorff dimension, unlike the ordinary observable Hausdorff dimension, of a given map, whenever finite, is always a nonnegative integer, which is exactly the reason why we introduce these similar concepts simultaneously. In addition, analogously to the Hausdorff dimensions of sets, it holds, in terms of the data above, that
\begin{equation}
    \dim_{\mathrm{oH}}^p(\Phi)\leq \overline{\dim}_{\mathrm{oH}}^p(\Phi)\leq \dim_{\mathrm{oH}}^p(\Phi)+1,
\end{equation}
the fact of which will be implicitly used later.

Below we recall the notion of Luzin--Lipschitz property of a map. We refer the reader to \cite{GT21} and \cite{O25} for instances of the use of this concept.
\begin{defi}
Let $\Y=(\Y,\dY)$ be a metric space. A map $\Phi\colon \X\to \Y$ will be said to have the Luzin--Lipschitz property if there are compact sets $K_i\subseteq \X$, $i\in \Nat$, with
\begin{equation}
    \mX\bigg(\X\Big\backslash \bigcup\limits_{i\in \Nat}K_i\bigg)=0
\end{equation}
such that
\begin{equation}
    \text{the map $\Phi|_{K_i}$ is Lipschitz for each $i\in \Nat$}.
\end{equation}
\end{defi}

\begin{rem}
    The definition above can vary in certain aspects sometimes, like one can require the decomposition to consist of merely Borel sets rather than compact sets. Under our choice of what we mean by a metric measure space, there is no difference, as can be easily verified. In general, though, on ``incomplete metric measure spaces'', the two approaches are not exactly the same.
\end{rem}

In many geometrically interesting situations, any real-valued Sobolev function turns out to have the Luzin--Lipschitz property. It therefore makes sense to abstract this attribute into a separate characteristic of the underlying space, which is exactly what we do below. The corresponding name of the property is chosen by analogy with the well-known Sobolev-to-Lipschitz property appearing in the theory of $\mathrm{RCD}$-spaces; see \cite[Definition 6.1.2]{GP20} for more on this.
\begin{defi}\label{def:SobLL}
Let $p\in [1,+\infty)$. The space $\X$ will be said to satisfy the $p$-Sobolev-to-Luzin--Lipschitz property if any function in $\mathrm{S}^{1,p}(\X)$ has the Luzin--Lipschitz property.
\end{defi}

\begin{rem}
    As it is mentioned briefly in \cref{ss:Intr}, a central class of metric measure spaces satisfying the Sobolev-to-Luzin--Lipschitz property is represented by those that possess certain variants of the doubling and Poincar{\'e} inequalities. In fact, the local version of both would suffice, as shown in \cite[Lemma 8.5]{BB18}. In particular, any domain in a Riemannian manifold, as well as in any sub-Riemannian manifold, under the natural metric and measure, falls into this admissible class. On top of that, we note also that even some reasonable ``infinite-dimensional'' metric measure spaces turn out to enjoy such a feature; see \cite{ABT18} for the details.
\end{rem}

Let us mention separately that the just-defined property passes to vector-valued functions as well, which will be implicitly used later. Namely, given $p\in [1,+\infty)$, if the space $\X$ satisfies the $p$-Sobolev-to-Luzin--Lipschitz property, then, given $J\in \Nat_0$ and $\Psi\in \mathrm{S}^{1,p}(\X,\R^J_{\infty})$, the function $\Psi$ has the Luzin--Lipschitz property. This is an immediate consequence of the relevant definitions.

\section{Auxiliary statements}
Here we list several known facts that will form the basis for our subsequent reasoning.

We start with the following folklore feature of the lattice of measurable functions.
\begin{prop}\label{prop:EssSupInt}
    Let $\X=(\X,\dX,\mX)$ be a metric measure space, let $\mathscr{V}\subseteq \Leb_+(\X)$ be a nonempty set, let $\eta\in \R_+$ be such that
    \begin{equation}
        \sup\limits_{\mathcal{V}}\int\limits_{\X} \Bigg(\bigvee\limits_{v\in \mathcal{V}} v \Bigg)\D \mX\leq \eta,
    \end{equation}
    where the supremum is taken over all nonempty finite families $\mathcal{V}\subseteq \mathscr{V}$. Then one has
    \begin{equation}
        \int\limits_{\X}\Bigg(\bigvee\limits_{v\in \mathscr{V}} v\Bigg)\D\mX \leq \eta.
    \end{equation}
    In particular, one has
    \begin{equation}
        \bigvee\limits_{v\in \mathscr{V}} v\in \Leb^1_+(\X).
    \end{equation}
    \begin{proof}
        The proof can be found, for instance, in \cite[Lemma B.1]{SV25}.
    \end{proof}
\end{prop}
\noindent Clearly, the provided attribute has nothing to do with exactly the metric measure space structure, as all the same is valid on any measure space.

Below we record a lemma that, while being mostly technical in its essence, is truly crucial for the proof methods that we will use later. There, a finite Borel measure is constructed on the target space of a weakly Sobolev map that controls the energy of the real-valued Lipschitz post-compositions of the map on the preimages of arbitrary Borel sets, importantly without an a priori assumption that the map is Sobolev. The corresponding statement should also be compared with its ``classical'' precursor in the form depicted, for instance, in \cite{VU02}, where quite an analogous object was constructed, moreover in application to a problem quite similar in spirit to the one under discussion in our paper.
\begin{lem}\label{lem:ExistsCoMeas}
    Let $\X=(\X,\dX,\mX)$ be a metric measure space, let $\Y=(\Y,\dY)$ be a metric space, let $p\in [1,+\infty)$, let $\Phi\in \mathrm{M}(\X,\Y)\cap \mathrm{wS}^{1,p}(\X,\Y)$ be such that
          \begin{equation}\label{eq:ExistsCoMeas0}
            \sup\limits_{f\in \LIP_1(\Y)} \En_p[f\circ \Phi](\X)<+\infty.
        \end{equation}
    Then the set function $\Theta_{\Phi,p}\colon \mathcal{P}(\Y)\to \R_+$ given for any $S\subseteq \Y$ by
    \begin{equation}
        \Theta_{\Phi,p}(S)=\inf\limits_{Q}\sup\limits_{f\in \LIP_1(\Y)} \En_p[f\circ \Phi]\big(\Phi^{-1}(Q)\big),
    \end{equation}
    where the infimum is taken over all Borel sets $Q\subseteq \Y$ with $S\subseteq Q$, is a finite Borel-regular measure on $\Y$.
    \begin{proof}
        The provided statement can be verified in essentially the same way as in \cite[Theorem 3.5]{SV25}.
    \end{proof}
\end{lem}
\noindent As will be shown later, namely in \cref{lem:EquiBEn}, the additional condition in \eqref{eq:ExistsCoMeas0} can be remarkably removed, i.e., it follows automatically from all the other ones.

Let us point out separately the following straightforward properties of the above-defined objects that will be of use later. Let $\X,\Y,p,\Phi$ be as in the lemma. First, we have
\begin{equation}\label{eq:PropCoMeas1}
    \Theta_{\Phi,p}(Q)=\sup\limits_{f\in \LIP_1(\Y)} \En_p[f\circ \Phi]\big(\Phi^{-1}(Q)\big)\qquad \text{for any Borel set $Q\subseteq \Y$}.
\end{equation}
Then, given $\Phi'\in[\Phi]_{\mX}$, we have
\begin{equation}\label{eq:PropCoMeas2}
    \Theta_{\Phi,p}(Q)=\Theta_{\Phi',p}(Q)\qquad \text{for any Borel set $Q\subseteq \Y$}.
\end{equation}
Finally, given a metric space $\Z=(\Z,\dZ)$ and $F\in \LIP_1(\Y,\Z)$, we have
\begin{equation}\label{eq:PropCoMeas3}
    \Theta_{F\circ \Phi,p}(Q)\leq \Theta_{\Phi,p}\big(F^{-1}(Q)\big) \qquad \text{for any Borel set $Q\subseteq \Z$}.
\end{equation}

Now we provide the main tool for our forthcoming analysis. The corresponding statement says basically that a metric space, under an appropriate dimensional bound on it, in the presence of any ``reasonable measure'' can be partitioned into a countable collection of certain ``nondegenerate'' charts, which, in turn, are expressed in terms of specific vector-valued Lipschitz functions on the space that ``effectively'' control the speed of almost every curve therein. As was indicated already in \cref{ss:Intr}, the condition below is quite instructive to compare with the BLD condition in the sense of \cite{LD13} due to significant ideological similarities between the two.
\begin{theo}\label{theo:BLD}
   Let $\Y=(\Y,\dY)$ be a separable, locally complete metric space, let $d\in \Nat_0$, let $Q\subseteq \Y$ be a Borel set with $\dim_{\mathrm{H}}(Q)\leq d$, let $\nu$ be a locally finite Borel measure on $\Y$, let $\epsilon\in (0,1)$. Then there exist disjoint Borel sets $Q_i\subseteq Q$, $i\in \Nat$, and functions $H^i_{\infty}\in \LIP_1(\Y,\R^d_{\infty})$, $i\in \Nat$, such that the following conditions hold:
   \begin{itemize}
       \item[$\rm I)$] one has
       \begin{equation}
           \nu\bigg(Q\Big\backslash\bigcup\limits_{i\in \Nat}Q_i\bigg)=0;
       \end{equation}
       \item[$\rm II)$] for each $i\in \Nat$, letting $H^i_1\in \LIP(\Y,\R^d_1)$ be the map acting as $H^i_{\infty}$, one has, for $\Mod^{\nu}_{\infty}$-a.e. $\beta\in \Cur_{\mathrm{L}}(\Y)$, that
       \begin{equation}\label{eq:BLD0}
           \ms_{\beta}(t)\leq (1+\epsilon)\ms_{H^i_1\circ \beta}(t)\qquad \text{for $\mathcal{L}^1$-a.e. $t\in \beta^{-1}(Q_i)$}.
       \end{equation}
   \end{itemize}
   \begin{proof}
       The presented statement, up to certain adjustments, is in fact one of the main results of the previous work by the author of the current manuscript, namely \cite{O26.2}. To derive exactly the desired formulation, one just needs to combine assertion I) of \cite[Theorem 5.30]{O26.2} with \cite[Definition 5.18]{O26.2} and \cite[Theorem 5.13]{O26.2}.
   \end{proof}
\end{theo}

\section{Energy bounds}

Here we establish a remarkable self-improvement effect for energies of weakly Sobolev maps.

In this section, a metric measure space $\X=(\X,\dX,\mX)$, a nonempty metric space $\Y=(\Y,\dY)$, and an exponent $p\in [1,+\infty)$ are assumed fixed.

For what follows, let us fix $\widebar{y}\in \Y$. Let then $\mathbb{L}$ be the vector subspace of $\LIP(\Y)$ made of all $f\in \LIP(\Y)$ with $f(\widebar{y})=0$. We also consider on $\mathbb{L}$ the norm $\|\cdot\|_{\mathbb{L}}$ given for any $f\in \mathbb{L}$ by
\begin{equation}
    \|f\|_{\mathbb{L}}\coloneqq \inf\Big\{L\in \R_+\bigm| \text{$\big|f(y')-f(y)\big|\leq L\dY(y,y')$ for any $y,y'\in \Y$}\Big\};
\end{equation}
the fact that this assignment indeed defines a norm on $\mathbb{L}$ is trivial to check. We thus get at our disposal the normed vector space $\mathbb{L}=\big(\mathbb{L},\|\cdot\|_{\mathbb{L}}\big)$. We send the reader to \cite[Section 2]{W18} for the necessary details.

\begin{prop}\label{prop:SpLipFunc}
    The space $\mathbb{L}$ is a Banach space. In addition, given $f\in \mathbb{L}$ and $(f_n)_{n\in \Nat}\subseteq \mathbb{L}$ such that the sequence $(f_n)_{n\in \Nat}$ converges in $\mathbb{L}$ to $f$, the sequence $(f_n)_{n\in \Nat}$ converges pointwise on $\Y$ to $f$.
    \begin{proof}
        The first part is verified in \cite[Proposition 2.3]{W18}. The second part can be easily checked directly.
    \end{proof}
\end{prop}

It is well known that the space $\Leb(\X)$ equipped with the topology of the local convergence in $\mX$-measure on $\X$ is an $\mathrm{F}$-space, i.e., it is a topological vector space whose topology is induced by a translation-invariant complete metric on $\Leb(\X)$; this fact can be found in \cite[Section 1.1.2]{GP20}. So, let $\mathsf{d}_{\Leb(\X)}$ be any such metric.

The space $\mathrm{S}^{1,p}(\X)$ clearly forms a vector space. We consider then the seminorm $\|\cdot\|_{\mathrm{S}^{1,p}(\X)}$ on $\mathrm{S}^{1,p}(\X)$ given for any $\phi\in \mathrm{S}^{1,p}(\X)$ by
\begin{equation}
    \|\phi\|_{\mathrm{S}^{1,p}(\X)}\coloneqq \big(\En_p[\phi](\X)\big)^{\frac{1}{p}};
\end{equation}
the fact that this assignment indeed defines a seminorm on $\mathrm{S}^{1,p}(\X)$ is standard.

Let $\mathbb{V}$ be the vector subspace of $\Leb(\X)$ made of all $v\in \Leb(\X)$ for which there is $\phi\in \mathrm{S}^{1,p}(\X)$ with $v=[\phi]_{\mX}$. In more transparent terms, the space $\mathbb{V}$ consists of all the elements of $\mathrm{S}^{1,p}(\X)$ but identified up to $\mX$-a.e. equality. We consider also the metric $\mathsf{d}_{\mathbb{V}}$ on $\mathbb{V}$ given for all $\phi_1,\phi_2\in \mathrm{S}^{1,p}(\X)$ by
\begin{equation}
    \mathsf{d}_{\mathbb{V}}\big([\phi_1]_{\mX},[\phi_2]_{\mX}\big)\coloneqq \mathsf{d}_{\Leb(\X)}\big([\phi_1]_{\mX},[\phi_2]_{\mX}\big)+\|\phi_2-\phi_1\|_{\mathrm{S}^{1,p}(\X)}.
\end{equation}
The fact that this assignment is well posed, i.e., the quantity on the left does not depend on the chosen representatives, and indeed defines a metric on $\mathbb{V}$ can be easily verified. We now endow $\mathbb{V}$ with the topology induced by $\mathsf{d}_{\mathbb{V}}$. It is then clear that this turns $\mathbb{V}$ into a pre-$\mathrm{F}$-space, i.e., it is a topological vector space whose topology is induced by a translation-invariant metric on $\mathbb{V}$. 

The statement below is the cornerstone one for this section. While it may seem standard and is perhaps well known among the experts, it is not apparently presented in this exact form in the literature.
\begin{lem}\label{lem:HomSobSpCompl}
    The space $\mathbb{V}$ is an $\mathrm{F}$-space.
    \begin{proof}
        It definitely suffices to verify that the metric $\mathsf{d}_{\mathbb{V}}$ is complete. The core of the reasoning lies in the following important circumstance: the approach to the Sobolev calculus adopted in the current manuscript, the Newtonian one, is fully compatible with the so-called test-plan approach. Namely, the two approaches yield entirely the same homogeneous Sobolev spaces (as sets) and the same minimal weak upper gradients. In the greatest generality, albeit under slightly stronger requirements for the underlying metric measure space that are immaterial for our purposes, this pivotal result is established in \cite[Theorem 7.1]{AILP24}, at least for functions with a priori integrability assumptions. This constraint, in turn, can be easily removed by using the standard strong locality property of the minimal weak upper gradients, which is given by \cite[Proposition 6.3.22]{HKST15} for the Newtonian approach and by \cite[Theorem 2.1.28]{GP20} for the test-plan approach. Once this fact is taken into account, what remains is to check the stated property in terms of the test-plan definition. This, in turn, can be extracted from the proofs given in \cite[Proposition 2.1.13]{GP20} and \cite[Theorem 2.1.17]{GP20}. We omit further details for the sake of brevity of the exposition.
    \end{proof}
\end{lem}

It is convenient to introduce the following notation. Given $\Phi\in \mathrm{wS}^{1,p}(\X,\Y)$, by $\mathfrak{C}_{\Phi,p}$, we will denote the linear operator $\mathbb{L}\to \mathbb{V}$ sending $f\in \mathbb{L}$ to $[f\circ \Phi]_{\mX}$.

With the adopted topologies on $\mathbb{L}$ and $\mathbb{V}$, we get the following key property of the composition operator above.
\begin{prop}\label{prop:CompOpCl}
    The operator $\mathfrak{C}_{\Phi,p}$ is closed.
    \begin{proof}
        Let $f\in \mathbb{L}$, $\phi\in \mathrm{S}^{1,p}(\X)$, $(f_n)_{n\in \Nat}\subseteq \mathbb{L}$ be such that the sequence $(f_n)_{n\in \Nat}$ converges to $f$ in $\mathbb{L}$ and the sequence $\big([f_n\circ \Phi]_{\mX}\big)_{n\in \Nat}$ converges to $[\phi]_{\mX}$ in $\mathbb{V}$. By \cref{prop:SpLipFunc}, it then follows that the sequence $(f_n)_{n\in \Nat}$ converges to $f$ pointwise on $\Y$, whence the sequence $(f_n\circ \Phi)_{n\in \Nat}$ converges pointwise on $\X$ to $f\circ \Phi$. On the other hand, we observe that the convergence of $\big([f_n\circ \Phi]_{\mX}\big)_{n\in \Nat} $ in $\mathbb{V}$ to $[\phi]_{\mX}$ implies that the sequence $(f_n\circ \Phi)_{n\in \Nat}$ converges locally in $\mX$-measure on $\X$ to $\phi$, whence, by the Riesz theorem, for some increasing sequence $(n_k)_{k\in \Nat}\subseteq \Nat$, the sequence $(f_{n_k}\circ \Phi)_{k\in \Nat}$ converges $\mX$-a.e. on $\X$ to $\phi$. These two facts clearly imply that $[f\circ \Phi]_{\mX}=[\phi]_{\mX}$. By definition, this implies the closedness of the operator, so the proof is complete.
    \end{proof}
\end{prop}

The lemma below provides the main outcome of the current section. This statement should be compared with \cite[Lemma 3.2]{OT24}, where a similar phenomenon, in a much simpler setting though, was achieved by slightly more direct methods, as well as with \cite[Proposition 27]{P26}, where essentially the same attribute is verified, yet in the classical framework only.
\begin{lem}\label{lem:EquiBEn}
    Let $\Phi\in \mathrm{wS}^{1,p}(\X,\Y)$. Then one has
    \begin{equation}\label{eq:EquiBEn0}
        \sup\limits_{f\in \LIP_1(\Y)} \En_p[f\circ \Phi](\X)<+\infty.
    \end{equation}
    \begin{proof}
        Since the spaces $\mathbb{L}$ and $\mathbb{V}$ are $\mathrm{F}$-spaces by \cref{prop:SpLipFunc} and \cref{lem:HomSobSpCompl}, the operator $\mathfrak{C}_{\Phi,p}$ is therefore a closed linear operator between $\mathrm{F}$-spaces by \cref{prop:CompOpCl}, so, by the closed graph theorem in the form given, for instance, in \cite[Theorem 14.3.4]{NB10}, the operator $\mathfrak{C}_{\Phi,p}$ is also continuous and hence bounded, whence, rewriting everything properly, we can easily get that
        \begin{equation}
            \sup\limits_{\substack{f\in \mathbb{L}\\\|f\|_{\mathbb{L}}\leq 1}} \En_p[f\circ \Phi](\X)<+\infty.
        \end{equation}
       Since it obviously holds, for any $f\in \LIP_1(\Y)$, that
       \begin{equation}
           \big(f-f(\widebar{y})\1_{\Y}\big)\in \mathbb{L}
       \end{equation}
       and that
       \begin{equation}
           \big\|f-f(\widebar{y})\1_{\Y}\big\|_{\mathbb{L}}\leq 1,
       \end{equation}
       we immediately obtain, from the property given in \eqref{eq:InvConstAdd}, the estimate in \eqref{eq:EquiBEn0}. Thus the proof is fully complete.
    \end{proof}
\end{lem}

\section{Key estimates}
Here we obtain some cornerstone estimates between the minimal upper gradients of a map and of its post-composition with a Lipschitz map based on application of \cref{theo:BLD}.

In this section, a metric measure space $\X=(\X,\dX,\mX)$ and an exponent $p\in [1,+\infty)$ are assumed fixed.

The first step is to derive the following result about the transformation rule for moduli under a Dirichlet--Sobolev map. It might be well known among the experts in the field, but we prefer to give all the details for the sake of completeness of our exposition.
\begin{lem}\label{lem:ModCurvTrans}
    Let $\Y=(\Y,\dY)$ be a metric space, let $\Phi\in \mathrm{D}^{1,p}(\X,\Y)$, let $\mathit{B}\subseteq \Cur_{\mathrm{r}}(\Y)$ be such that $\Mod^{\Lambda_{\Phi,p}}_{\infty}(\mathit{B})=0$. Then it holds, for $\Mod_p^{\mX}$-a.e. $\gamma\in \Cur_{\mathrm{r}}(\X)$, that
    \begin{equation}
        \Phi\circ \gamma_{\mathrm{n}} \not\in \mathit{B}.
    \end{equation}
    \begin{proof}
        We consider the family
        \begin{equation}
            \G\coloneqq \Big\{\gamma\in \Cur_{\mathrm{r}}(\X)\bigm| \Phi\circ \gamma_{\mathrm{n}}\in \mathit{B}\Big\}.
        \end{equation}
       Our corresponding goal is to check that $\Mod_p^{\mX}(\G)=0$.

        Fix $\varepsilon \in (0,1)$. As $\Mod_{\infty}^{\Lambda_{\Phi,p}}(\mathit{B})=0$, by definition, there is a function $\rho_{\Y}\in \overline{\mathrm{B}}_+(\Y)$ admissible for $\mathit{B}$ such that
        \begin{equation}
            \esup\limits_{\Lambda_{\Phi,p}}(\rho_{\Y})\leq \varepsilon.
        \end{equation}
        We then find $\mX$-representatives $G,\rho_{\Phi}\in \mathrm{B}_+(\X)$ of, respectively, $|\mathbf{D} \Phi|_p$ and $\rho_{\Y}\circ \Phi$, which is possible by the property given in \eqref{eq:BorRepr}. After that, we consider the Borel function $\rho_{\X}\in \mathrm{B}_+(\X)$ given by
        \begin{equation}
            \rho_{\X}\coloneqq \rho_{\Phi} G.
        \end{equation}
        Let also $E\subseteq \X$ be an $\mX$-measurable set with $\mX(E)=0$ such that
        \begin{equation}
            \rho_{\Phi}(x)=\rho_{\Y}\big(\Phi(x)\big)\qquad \text{for any $x\in \big(\X\backslash E\big)$},
        \end{equation}
        and consider then the family $\G_E$ of all $\gamma\in \G$ for which the equality
        \begin{equation}
            \mathcal{L}^1\big(\gamma_{\mathrm{n}}^{-1}(E)\big)=0,
        \end{equation}
        the inclusion
        \begin{equation}
            \Phi\circ \gamma_{\mathrm{n}}\in \Cur_{\mathrm{ac}}(\Y),
        \end{equation}
        and the condition
        \begin{equation}
            \ms_{\Phi\circ \gamma_{\mathrm{n}}}(s)\leq G\big(\gamma_{\mathrm{n}}(s)\big)\qquad \text{for $\mathcal{L}^1$-a.e. $s\in \dom(\gamma_{\mathrm{n}})$}
        \end{equation}
        hold. With the help of \cref{prop:ModZeroNegSet}, we can verify that
        \begin{equation}\label{eq:ModCurvTrans1}
        \Mod_p^{\mX}\big(\G\backslash \G_E\big)=0.
        \end{equation}
        We claim now that the function $\rho_{\X}$ is admissible for $\G_E$. Indeed, due to the property given in \eqref{eq:ParInvLinInt}, the fact that the function $\rho_{\Y}$ is admissible for $\mathit{B}$, and the defining properties of $\G_E$, we can write, for any $\gamma\in \G_E$, that
        \begin{equation}
        \begin{gathered}
            \int\limits_{\gamma}\rho_{\X}=\int\limits_{\dom(\gamma_{\mathrm{n}})} \rho_{\Phi}\big(\gamma_{\mathrm{n}}(s)\big)G\big(\gamma_{\mathrm{n}}(s)\big)\D \mathcal{L}^1(s)=\\
            =\int\limits_{\dom(\gamma_{\mathrm{n}})} \rho_{\Y}\Big(\Phi\big(\gamma_{\mathrm{n}}(s)\big)\Big) G\big(\gamma_{\mathrm{n}}(s)\big)\D \mathcal{L}^1(s)\geq \\
            \geq \int\limits_{\dom(\gamma_{\mathrm{n}})} \rho_{\Y}\Big(\Phi\big(\gamma_{\mathrm{n}}(s)\big)\Big) \ms_{\Phi\circ \gamma_{\mathrm{n}}}(s)\D \mathcal{L}^1(s)=\int\limits_{\Phi\circ \gamma_{\mathrm{n}}} \rho_{\Y}\geq 1.
            \end{gathered}
        \end{equation}
In turn, we can write, via the change-of-variable formula for a measure and its push-forward under a map and the property given in \eqref{eq:CoEnDefEq}, that
        \begin{equation}
            \int\limits_{\X} \rho_{\X}^p\D \mX=\int\limits_{\Y} \rho_{\Y}^p\D \Lambda_{\Phi,p}\leq \varepsilon^p \Lambda_{\Phi,p}(\Y)=\varepsilon^p\En_p[\Phi](\X).
        \end{equation}
        By the very definition of the $p$-modulus, we get that
        \begin{equation}
            \Mod_p^{\mX}(\G_E)\leq \varepsilon^p\En_p[\Phi](\X),
        \end{equation}
        whence, thanks to \eqref{eq:ModCurvTrans1}, we deduce that
        \begin{equation}
            \Mod_p^{\mX}(\G)\leq \varepsilon^p\En_p[\Phi](\X).
        \end{equation}
        Due to the arbitrariness of $\varepsilon$ and the inclusion $\Phi\in \mathrm{D}^{1,p}(\X,\Y)$, we can clearly conclude that
        \begin{equation}
            \Mod_p^{\mX}(\G)=0,
        \end{equation}
        as desired. This finishes the proof.
    \end{proof}
\end{lem}

The main outcome of \cref{theo:BLD} is represented by the condition given in \eqref{eq:BLD0}, which, however, is merely proven to hold for almost every Lipschitz curve, rather than for almost every rectifiable curve. In the technical proposition below, we resolve this issue.
\begin{prop}\label{prop:CurvFromLipToRect}
    Let $\Y=(\Y,\dY)$ be a metric space, let $\nu$ be a locally finite Borel measure on $\Y$, let $\Z=(\Z,\dZ)$ be a metric space, let $F\in \LIP(\Y,\Z)$, let $Q\subseteq \Y$ be a Borel set, let $w\in \mathrm{B}_+(\Y)$. Suppose it holds, for $\Mod_{\infty}^{\nu}$-a.e. $\beta\in \Cur_{\mathrm{L}}(\Y)$, that
    \begin{equation}\label{eq:CurvFromLipToRect0}
        \ms_{\beta}(t)\leq w\big(\beta(t)\big)\ms_{F\circ \beta}(t)\qquad \text{for $\mathcal{L}^1$-a.e. $t\in \beta^{-1}(Q)$}.
    \end{equation}
    Then it holds, for $\Mod_{\infty}^{\nu}$-a.e. $\beta\in \Cur_{\mathrm{r}}(\Y)$, that
    \begin{equation}\label{eq:CurvFromLipToRect00}
        \ms_{\beta}(t)\leq w\big(\beta(t)\big)\ms_{F\circ \beta}(t)\qquad \text{for $\mathcal{L}^1$-a.e. $t\in \beta^{-1}(Q)$}.
    \end{equation}
    \begin{proof}
    Let $\mathit{B}_0$ be the family of all $\beta\in \Cur_{\mathrm{L}}(\Y)$ for which the condition in \eqref{eq:CurvFromLipToRect0} does not hold, so that we have $\Mod_{\infty}^{\nu}( \mathit{B}_0)=0$ by the premise. Consider then the collection
        \begin{equation}\label{eq:CurvFromLipToRect1}
            \mathit{B}\coloneqq \Big\{\beta\in \Cur_{\mathrm{r}}(\Y)\bigm|\beta_{\mathrm{n}}\in \mathit{B}_0\Big\}.
        \end{equation}
        From the property of the $\infty$-modulus given in \cref{prop:ModEst}, it follows that
\begin{equation}\label{eq:CurvFromLipToRect2}
    \Mod_{\infty}^{\nu}(\mathit{B})\leq \Mod_{\infty}^{\nu}(\mathit{B}_0)=0.
\end{equation}
Fix now $\beta\in \big(\Cur_{\mathrm{r}}(\Y)\backslash\mathit{B}\big)$, so that we have the inclusion $\beta_{\mathrm{n}}\in \big(\Cur_{\mathrm{L}}(\Y)\backslash \mathit{B}_0\big)$. By \cref{prop:NatParam}, we know that
\begin{equation}
    \beta_{\mathrm{n}}\circ \sigma_{\beta}=\beta.
\end{equation}
Applying then \cref{prop:ChainRulCur} to $\beta_{\mathrm{n}}$ with $\sigma_{\beta}$ and to $F\circ \beta_{\mathrm{n}}$ with $\sigma_{\beta}$, we can get that
\begin{equation}
    \ms_{\beta}(t)=\ms_{\beta_{\mathrm{n}}}\big(\sigma_{\beta}(t)\big)(\partial\sigma_{\beta})(t)\qquad \text{for $\mathcal{L}^1$-a.e. $t\in \dom(\beta)$}
\end{equation}
and
\begin{equation}
    \ms_{F\circ \beta}(t)=\ms_{F\circ \beta_{\mathrm{n}}}\big(\sigma_{\beta}(t)\big)(\partial\sigma_{\beta})(t)\qquad \text{for $\mathcal{L}^1$-a.e. $t\in \dom(\beta)$}.
\end{equation}
Combining these two relations, and the same proposition once again, with the condition in \eqref{eq:CurvFromLipToRect0} for $\beta_{\mathrm{n}}$, we can achieve exactly the condition in \eqref{eq:CurvFromLipToRect00} for $\beta$. This, in light of \eqref{eq:CurvFromLipToRect2}, finishes the proof.
\end{proof}
\end{prop}

In the lemma below, we derive, for a Dirichlet--Sobolev map and its post-composition with a Lipschitz map, an estimate in terms of the minimal upper gradients from a corresponding estimate in terms of the metric speeds. In a sense, this statement contains the cornerstone idea lying behind our overall reasoning, namely that a control of curves in a space can be suitably transferred to a control of maps into that space.
\begin{lem}\label{lem:FromMetrSpToUpGr}
    Let $\Y=(\Y,\dY)$ be a metric space, let $\Phi\in \mathrm{D}^{1,p}(\X,\Y)$, let $\Z=(\Z,\dZ)$ be a metric space, let $F\in \LIP(\Y,\Z)$, let $Q\subseteq \Y$ be a Borel set, let $w\in \mathrm{B}_+(\Y)$. Suppose it holds, for $\Mod_{\infty}^{\Lambda_{\Phi,p}}$-a.e. $\beta\in \Cur_{\mathrm{L}}(\Y)$, that
    \begin{equation}\label{eq:FromMetrSpToUpGr0}
        \ms_{\beta}(t)\leq w\big(\beta(t)\big) \ms_{F\circ \beta}(t)\qquad \text{for $\mathcal{L}^1$-a.e. $t\in \beta^{-1}(Q)$}.
    \end{equation}
    Then the following condition holds:
    \begin{equation}\label{eq:FromMetrSpToUpGr00}
        |\mathbf{D} \Phi|_p(x)\leq w\big(\Phi(x)\big)\big|\mathbf{D}(F\circ \Phi)\big|_p(x)\qquad \text{for $\mX$-a.e. $x\in \Phi^{-1}(Q)$}.
    \end{equation}
    \begin{proof}
    To start with, we note that the inclusion $F\circ \Phi\in \mathrm{D}^{1,p}(\X,\Z)$ holds due to the property given by \eqref{eq:DirLipCl}. Next, we find $\mX$-representatives $W,G_{\Phi},G_{F\circ \Phi}\in \mathrm{B}_+(\X)$ of, respectively, $w\circ \Phi$, $|\mathbf{D} \Phi|_p$, and $\big|\mathbf{D}(F\circ \Phi)\big|_p$, which is possible by the property given in \eqref{eq:BorRepr}. Find also a Borel set $Q_{\X}\subseteq \X$ with $\Phi^{-1}(Q)\subseteq Q_{\X}$ such that $\mX\big(Q_{\X}\backslash \Phi^{-1}(Q)\big)=0$, which is possible due to the fact that the measure $\mX$ is Borel-regular. Consider then the function $G\in \mathrm{B}_+(\X)$ given for any $x\in \X$ by
    \begin{equation}
        G(x)\coloneqq \begin{dcases}
        W(x)G_{F\circ \Phi}(x),\qquad x\in Q_{\X},\\
            G_{\Phi}(x),\qquad \qquad\quad\;\,\text{otherwise}.
        \end{dcases}
    \end{equation}
    Our corresponding goal is to prove that this function is a $p$-weak upper gradient of $\Phi$, which clearly suffices to obtain the condition in \eqref{eq:FromMetrSpToUpGr00}.
    
    First, we find an $\mX$-measurable set $E\subseteq \X$ with $\mX(E)=0$ and $\big(Q_{\X}\backslash \Phi^{-1}(Q)\big)\subseteq E$ such that
    \begin{equation}
        W(x)=w\big(\Phi(x)\big)\qquad \text{for any $x\in \big(\X\backslash E\big)$}.
    \end{equation}
    We observe then, thanks to \cref{prop:CurvFromLipToRect}, that the condition in \eqref{eq:FromMetrSpToUpGr0} holds for $\Mod_{\infty}^{\Lambda_{\Phi,p}}$-a.e. $\beta\in \Cur_{\mathrm{r}}(\Y)$. Next, let $\G$ be the family of all $\gamma\in \Cur_{\mathrm{r}}(\X)$ with
    \begin{equation}
        \mathcal{L}^1\big(\gamma_{\mathrm{n}}^{-1}(E)\big)=0
    \end{equation}
    such that for $\beta= \Phi\circ \gamma_{\mathrm{n}}$ the inclusion $\beta\in \Cur_{\mathrm{ac}}(\Y)$, the condition in \eqref{eq:FromMetrSpToUpGr0}, the condition
    \begin{equation}
        \ms_{\beta}(s)\leq G_{\Phi}\big(\gamma_{\mathrm{n}}(s)\big)\qquad \text{for $\mathcal{L}^1$-a.e. $s\in \dom(\gamma_{\mathrm{n}})$},
    \end{equation}
    and the condition
    \begin{equation}
        \ms_{F\circ \beta}(s)\leq G_{F\circ \Phi}\big(\gamma_{\mathrm{n}}(s)\big)\qquad \text{for $\mathcal{L}^1$-a.e. $s\in \dom(\gamma_{\mathrm{n}})$}
    \end{equation}
    hold. Taking \cref{prop:ModZeroNegSet} and \cref{lem:ModCurvTrans} into account, we see that
    \begin{equation}
        \Mod_{p}^{\mX}\big(\Cur_{\mathrm{r}}(\X)\backslash \G\big)=0.
    \end{equation}
    We now fix $\gamma\in \G$ and, combining all the properties invoked above, deduce that
    \begin{equation}
        \ms_{\Phi\circ \gamma_{\mathrm{n}}}(s)\leq G(\gamma_{\mathrm{n}}(s))\qquad \text{for $\mathcal{L}^1$-a.e. $s\in \dom(\gamma_{\mathrm{n}})$}.
    \end{equation}
    This allows us to conclude that the function $G$ is indeed a $p$-weak upper gradient of $\Phi$, the fact of which, as already discussed, finishes the proof.
    \end{proof}
\end{lem}

Now we are ready to establish the theorem containing the central estimate of our whole exposition, which basically stems from the application of \cref{theo:BLD} in combination with \cref{lem:FromMetrSpToUpGr}. In its essence, this statement is of a similar flavor as the structural relation obtained in \cite[Theorem 21]{P26} (forming moreover the core of the corresponding work) that provides a universal control of the modulus of the gradient of an arbitrary vector-valued Sobolev function (on a $1$-dimensional or $2$-dimensional Euclidean domain) in terms of the modulus of the gradient of its post-composition with a specially constructed ``tracking'' vector-valued Lipschitz function whose codomain is notably of the same dimension as the source domain. Furthermore, quite an analogous relation, and in our much more general setting, can in fact be extracted, after a bit of necessary reasoning, from the forthcoming proof as well.
\begin{theo}\label{theo:EstEnCoMeas}
    Let $\Y=(\Y,\dY)$ be a separable, complete metric space, let $\Phi\in \mathrm{D}^{1,p}(\X,\Y)$. Then the assertions below hold.
    \begin{itemize}
    \item[$\rm I)$] Let $d\in \Nat_0$, let $Q\subseteq \Y$ be a Borel set with $\dim_{\mathrm{H}}(Q)\leq d$. Then the following estimate holds:
\begin{equation}\label{eq:EstEnCoMeas0}
    \Lambda_{\Phi,p}(Q)\leq d^p \Theta_{\Phi,p}(Q).
\end{equation}
    \item[$\rm II)$] Let $d\in \Nat_0$, let $Q\subseteq \Y$ be a Borel set with $\dim_{\mathrm{H}}(Q)\leq d$ such that, for some $\mX$-measurable set $E\subseteq \X$ with $\En_p[\Phi](E)=0$, one has $\Phi\big(\X\backslash E\big)\subseteq Q$. Then the following estimate holds:
    \begin{equation}\label{eq:EstEnCoMeas00}
        \En_p[\Phi](\X)\leq d^p\Theta_{\Phi,p}(\Y).
    \end{equation}
    \end{itemize}
    \begin{proof}
        We will assume further that $d>0$ not to make separate technical amendments to our reasoning when $d=0$, which is in fact substantially simpler. More explicitly, one just needs to invoke the well-known fact that the minimal weak upper gradient of any Sobolev map to a metric space is zero on the preimage of a set of Hausdorff dimension zero. We omit a detailed consideration of this case for brevity.

        Let us start with the first part.
        
        We look at the locally finite Borel measure $\Lambda_{\Phi,p}$ on the separable, complete metric space $\Y$ in the context of \cref{theo:BLD}. By applying it, after fixing $\epsilon\in (0,1)$, we get the existence of Borel sets $Q_{i}\subseteq Q$, $i\in \Nat$, and of functions $H^i_{\infty}\in \LIP_1(\Y,\R^d_{\infty})$, $i\in \Nat$, with the corresponding properties. We also put
        \begin{equation}
            Q_0\coloneqq \bigg(Q\Big\backslash \bigcup\limits_{i\in \Nat}Q_i\bigg),
        \end{equation}
        so that we have $\Lambda_{\Phi,p}(Q_0)=0$. Fix $i\in \Nat$. Let $H^i_1\in \LIP(\Y,\R^d_1)$ be the map acting as $H^i_{\infty}$ and let $h^{i,1}$,..., $h^{i,d}\in \LIP_1(\Y)$ be the components of $H^i_{\infty}$. We are then exactly in a position to apply \cref{lem:FromMetrSpToUpGr} with respect to $Q_i$ and $H^i_1$, which gives us, in combination with \cref{prop:FinDimDir}, the following:
        \begin{equation}
            |\mathbf{D} \Phi|_p(x)\leq (1+\epsilon)\sum\limits_{l\in \overline{1,d}} \big|\mathbf{D} (h^{i,l}\circ \Phi)\big|_p(x)\qquad \text{for $\mX$-a.e. $x\in \Phi^{-1}(Q_{i})$}.
        \end{equation}
        This, in turn, implies, with the help of the property given in \eqref{eq:EnDefEq} and the Jensen inequality, that
        \begin{equation}
            \En_p[\Phi]\big(\Phi^{-1}(Q_{i})\big)\leq (1+\epsilon)^p d^{p-1}\sum\limits_{l\in \overline{1,d}} \En_p[h^{i,l}\circ \Phi]\big(\Phi^{-1}(Q_i)\big),
        \end{equation}
        from which we immediately get, with the use of \eqref{eq:PropCoMeas1}, that
        \begin{equation}
            \En_p[\Phi]\big(\Phi^{-1}(Q_i)\big)\leq (1+\epsilon)^p d^p \Theta_{\Phi,p}(Q_i).
        \end{equation}
        After summing these estimates over $i\in \Nat$ and using the property given in \eqref{eq:CoEnDefEq}, we can clearly write the following chain:
        \begin{equation}
        \begin{gathered}
            \Lambda_{\Phi,p}(Q)\leq \Lambda_{\Phi,p}(Q_0)+(1+\epsilon)^p d^p \sum\limits_{i\in \Nat}\Theta_{\Phi,p}(Q_i)\leq \\
            \leq (1+\epsilon)^p d^p \Theta_{\Phi,p}(Q).
            \end{gathered}
        \end{equation}
        Due to the arbitrariness of $\epsilon$, we arrive exactly at the desired estimate in \eqref{eq:EstEnCoMeas0}.

        We proceed with the second part.
        
        Based on the corresponding premise, we write, via the property given in \eqref{eq:CoEnDefEq} and the first part of the current theorem, that
        \begin{equation}
        \begin{gathered}
            \En_p[\Phi](\X)=\En_p[\Phi]\big(\X\backslash E\big)\leq \En_p[\Phi]\big(\Phi^{-1}(Q)\big)=\\
            =\Lambda_{\Phi,p}(Q)\leq  d^p\Theta_{\Phi,p}(\Y).
            \end{gathered}
        \end{equation}
        This is exactly the desired estimate in \eqref{eq:EstEnCoMeas00}.

        Thus, we can conclude the proof.
    \end{proof}
\end{theo}

\section{Proof finalization}

Here we provide the proof of our main result on the basis of what is obtained previously.

In this section, a metric measure space $\X=(\X,\dX,\mX)$, a metric space $\Y=(\Y,\dY)$, and an exponent $p\in [1,+\infty)$ are assumed fixed.

The theorem below forms a somewhat extended version of \cref{theo:MainRes}, within which, however, all the assumptions on the spaces are transferred onto a given map via the introduced notion of observable Hausdorff dimension. In turn, the corresponding connection links from the space conditions to the map condition will be verified in the lemmas afterwards. The forthcoming proof, as it is announced in \cref{ss:Intr}, goes with the idea developed within \cite[Theorem 25]{P26}. More explicitly, we are to consider an arbitrary ``finite-dimensional Lipschitz replica'' of the map and prove, via the estimate established in \cref{theo:EstEnCoMeas}, that its energy is bounded by the total mass of the measure given by \cref{lem:ExistsCoMeas}. Once this is done, the application of \cref{prop:EssSupInt} will give us the conclusion.
\begin{theo}\label{theo:GenTheo}
   Let $\Phi\in \mathrm{wS}^{1,p}(\X,\Y)$ be an $\mX$-essentially separably valued map of finite $p$-observable Hausdorff dimension. Then $\Phi\in \mathrm{S}^{1,p}(\X,\Y)$.
\begin{proof}
For a start, we deduce, by \cref{theo:Pettis}, that $\Phi\in \mathrm{M}(\X,\Y)$. Also, for brevity, let us put
\begin{equation}
    d\coloneqq \overline{\dim}^p_{\mathrm{oH}}(\Phi),
\end{equation}
so that, by the premise, we have $d<+\infty$.

We fix $J\in \Nat$ and $f^1,...,f^J\in \LIP_1(\Y)$.
    
   For each $j\in \overline{1,J}$, as $f^j\circ \Phi\in \mathrm{S}^{1,p}(\X)$, by definition, we can find $\psi^j\in [f^j\circ \Phi]_{\mX}\cap \mathrm{D}^{1,p}(\X)$. Let then $F\in \LIP(\Y,\R^J_{\infty})$ be the map given by the components $f^1,...,f^J$ and $\Psi$ be the map $\X\to \R^J_{\infty}$ given by the components $\psi^1,...,\psi^J$, so that the map $\Psi$ is an $\mX$-representative of $F\circ \Phi$ and, by \cref{prop:FinDimDir}, belongs moreover to $\mathrm{D}^{1,p}(\X,\R^J_{\infty})$. Thus, we can find an $\mX$-measurable set $E_0\subseteq \X$ with $\mX(E_0)=0$ such that
    \begin{equation}
        \Psi(x)=F\big(\Phi(x)\big)\qquad \text{for any $x\in \big(\X\backslash E_0\big)$}.
    \end{equation}
    Also, by \cref{defi:ObsDimMap}, we can find a Borel set $Q\subseteq \R^J_{\infty}$ with
    \begin{equation}
        \dim_{\mathrm{H}}(Q)\leq d
    \end{equation}
    such that
    \begin{equation}
        \Lambda_{\Psi,p}\big(\R^J_{\infty}\backslash Q\big)=\Lambda_{F\circ \Phi,p}\big(\R^J_{\infty}\backslash Q\big)=0.
    \end{equation}
    Putting
    \begin{equation}
        E\coloneqq E_0\cup (F\circ \Phi)^{-1}\big(\R^J_{\infty}\backslash Q\big),
    \end{equation}
    we get that
    \begin{equation}
        \Psi\big(\X\backslash E\big)\subseteq Q
    \end{equation}
    and, by the property given in \eqref{eq:CoEnDefEq}, that
    \begin{equation}
        \En_p[\Psi](E)=\En_p[F\circ \Phi](E)=0.
    \end{equation}
   We are exactly in a position to apply assertion II) of \cref{theo:EstEnCoMeas} with respect to $\Psi$, which gives us the following estimate:
\begin{equation}
    \En_p[\Psi](\X)\leq d^p \Theta_{\Psi,p}(\R^J_{\infty}).
\end{equation}
From the properties given in \eqref{eq:PropCoMeas2} and \eqref{eq:PropCoMeas3}, it then also immediately follows that
\begin{equation}
    \En_p[\Psi](\X)\leq d^p \Theta_{\Phi,p}(\Y).
\end{equation}
    This, in turn, can be transformed, thanks to assertion I) of \cref{prop:FinDimDir}, into the inequality
    \begin{equation}
        \int\limits_{\X} \bigvee\limits_{j\in \overline{1,J}}\Big(\big|\mathbf{D} (f^j\circ \Phi)\big|_p\Big)^p \D \mX\leq d^p\Theta_{\Phi,p}(\Y).
    \end{equation}

    We are left with the final step. Due to the arbitrariness of $J$ and $f^1,...,f^J$, what we obtained can be put in the following form:
    \begin{equation}
        \sup\limits_{\mathcal{F}}\int\limits_{\X} \bigvee\limits_{f\in \mathcal{F}} \Big(\big|\mathbf{D} (f\circ \Phi)\big|_p\Big)^p \D \mX\leq d^p \Theta_{\Phi,p}(\Y),
    \end{equation}
    where the supremum is taken over all nonempty finite families $\mathcal{F}\subseteq \LIP_1(\Y)$. In light of \cref{prop:EssSupInt}, we can write the chain
    \begin{equation}
    \begin{gathered}
    \int\limits_{\X} \Bigg(\bigvee\limits_{f\in \LIP_1(\Y)} \big|\mathbf{D} (f\circ \Phi)\big|_p\Bigg)^p \D \mX=\\=\int\limits_{\X} \bigvee\limits_{f\in \LIP_1(\Y)} \Big(\big|\mathbf{D} (f\circ \Phi)\big|_p\Big)^p \D \mX\leq d^p\Theta_{\Phi,p}(\Y),
    \end{gathered}
    \end{equation}
    which clearly implies that
    \begin{equation}
        \bigvee\limits_{f\in \LIP_1(\Y)} \big|\mathbf{D} (f\circ \Phi)\big|_p\in \Leb_+^p(\X).
    \end{equation}
    This, by definition, ensures that $\Phi\in \mathrm{S}^{1,p}(\X,\Y)$, as desired. This completes the proof.
\end{proof}
\end{theo}

Now we check that condition I) from \cref{theo:MainRes} entails the suitable dimensional bound on a given map as in \cref{theo:GenTheo}.  This will imply the first constituent of \cref{theo:MainRes}.
\begin{lem}\label{lem:BoundsSource}
    Suppose the space $\X$ is of finite Hausdorff dimension and satisfies the $p$-Sobolev-to-Luzin--Lipschitz property. Let $\Phi\in \mathrm{wS}^{1,p}(\X,\Y)$. Then the map $\Phi$ is of finite $p$-observable Hausdorff dimension with
    \begin{equation}
        \overline{\dim}_{\mathrm{oH}}^p(\Phi)\leq \overline{\dim}_{\mathrm{H}}(\X).
    \end{equation}
    \begin{proof}

    For brevity, let us put
    \begin{equation}
        d\coloneqq \overline{\dim}_{\mathrm{H}}(\X),
    \end{equation}
    so that we have $d<+\infty$ by the premise.
    
        Fix $J\in \Nat$ and $F\in \LIP(\Y,\R^J_{\infty})$. Since the inclusion $F\circ \Phi\in \mathrm{S}^{1,p}(\X,\R^J_{\infty})$ holds, by the properties given in \eqref{eq:wSobLipSt} and \eqref{eq:FinDimBanwSobSob}, and the space $\X$ satisfies the $p$-Sobolev-to-Luzin--Lipschitz property, we can find compact sets $K_i\subseteq \X$, $i \in \Nat$, with
        \begin{equation}\label{eq:BoundsSource1}
            \mX\bigg(\X\Big\backslash \bigcup\limits_{i\in \Nat}K_i\bigg)=0
        \end{equation}
        such that
        \begin{equation}
            \text{the function $(F\circ \Phi)|_{K_i}$ is Lipschitz for each $i\in \Nat$}.
        \end{equation}
        Putting then
        \begin{equation}
            Q\coloneqq \bigcup\limits_{i\in \Nat}F\big(\Phi(K_i)\big),
        \end{equation}
        we see that the set $Q$ is Borel with
        \begin{equation}
            \dim_{\mathrm{H}}(Q)\leq d.
        \end{equation}
        We now notice that
\begin{equation}
    \big(\X\backslash (F\circ \Phi)^{-1}(Q)\big)\subseteq \bigg(\X \Big\backslash \bigcup\limits_{i\in \Nat}K_i\bigg),
\end{equation}
        whence, due to \eqref{eq:CoEnDefEq} and \eqref{eq:BoundsSource1}, we get that
        \begin{equation}
           \Lambda_{F\circ \Phi,p}\big(\R^J_{\infty}\backslash Q\big)= \En_p[F\circ \Phi]\big(\X\backslash (F\circ \Phi)^{-1}(Q)\big)=0.
        \end{equation}
        Due to the arbitrariness of $J$ and $F$, we immediately obtain, by \cref{defi:ObsDimMap}, that
        \begin{equation}
            \overline{\dim}_{\mathrm{oH}}^p(\Phi)\leq d,
        \end{equation}
        as desired. This finishes the proof.
    \end{proof}
\end{lem}

And now we verify that condition II) from \cref{theo:MainRes} yields the necessary dimensional bound on a given map in the sense of \cref{theo:GenTheo} as well. This will imply the second constituent of \cref{theo:MainRes}.
\begin{lem}\label{lem:BoundsTarget}
    Suppose the space $\Y$ is of finite Hausdorff dimension. Let $\Phi\in \mathrm{wS}^{1,p}(\X,\Y)$. Then the map $\Phi$ is of finite $p$-observable Hausdorff dimension with
    \begin{equation}
        \overline{\dim}_{\mathrm{oH}}^p(\Phi)\leq \overline{\dim}_{\mathrm{H}}(\Y).
    \end{equation}
    \begin{proof}
    For brevity, let us put
    \begin{equation}
        d\coloneqq \overline{\dim}_{\mathrm{H}}(\Y),
    \end{equation}
    so that $d<+\infty$ by the premise.
    
        Fix $J\in \Nat$ and $F\in \LIP(\Y,\R^J_{\infty})$. Since
        \begin{equation}
            \dim_{\mathrm{H}}\big(F(\Y)\big)\leq d
        \end{equation}
        by the premise, we can find a Borel set $Q\subseteq \R^J_{\infty}$ with $F(\Y)\subseteq Q$ such that
        \begin{equation}
            \dim_{\mathrm{H}}(Q)\leq d.
        \end{equation}
       We also observe that $F\circ \Phi\in \mathrm{S}^{1,p}(\X,\R^J_{\infty})$, as follows from the properties given in \eqref{eq:wSobLipSt} and \eqref{eq:FinDimBanwSobSob}. All this immediately implies that
        \begin{equation}
               \Lambda_{F\circ \Phi,p}\big(\R^J_{\infty}\backslash Q\big)=0.
        \end{equation}
        Thus, due to the arbitrariness of $J$ and $F$, we immediately obtain, by \cref{defi:ObsDimMap}, that
        \begin{equation}
            \overline{\dim}^p_{\mathrm{oH}}(\Phi)\leq d,
        \end{equation}
        as desired. This finishes the proof.
    \end{proof}
\end{lem}

{\bf Acknowledgments.} The study continues research begun by the author as part of his Master's Thesis at the Moscow Institute of Physics and Technology. The author sincerely appreciates all the fruitful discussions on the topic with A. I. Tyulenev having taken place over the recent years, as well as the motivation provided by him for creating this paper.

\printbibliography

@article{AGS14,
  author  = {Ambrosio, Luigi and Gigli, Nicola and
             Savar{\'e}, Giuseppe},
  title   = {Calculus and heat flow in metric measure spaces
             and applications to spaces with {Ricci} bounds
             from below},
  journal = {Inventiones Mathematicae},
  volume  = {195},
  number  = {2},
  pages   = {289--391},
  year    = {2014},
  doi     = {10.1007/s00222-013-0456-1}
}

@article{SV25,
  author  = {Vodopyanov, Sergey K. and Sboev, Danil A.},
  title   = {Composition operators on Sobolev spaces
             with domain of metric measure spaces. II},
  journal = {Matematicheskii Sbornik},
  volume  = {217},
  number  = {8},
  pages   = {87--129},
  year    = {2026},
  doi     = {10.4213/sm10380}
}

@article{O26.2,
  title={On one relaxation of the bounded-length-distortion condition in the context of metric measure spaces},
  author={Oleinik, Roman D},
  journal={arXiv preprint arXiv:2607.00278},
  year={2026}
}

@article{ABT18,
  title={Lusin-type approximation of Sobolev by Lipschitz functions, in Gaussian and $\mathrm{RCD}(K,\infty)$ spaces},
  author={Ambrosio, Luigi and Bru{\`e}, Elia and Trevisan, Dario},
  journal={Advances in Mathematics},
  volume={339},
  pages={426--452},
  year={2018},
  publisher={Elsevier}
}

@article{VU02,
  author  = {Vodop'yanov, S. K. and Ukhlov, A. D.-O.},
  title   = {Superposition operators in Sobolev spaces},
  journal = {Russian Mathematics (Iz. VUZ)},
  volume  = {46},
  number  = {10},
  year    = {2002},
  pages   = {9--31},
}

@article{ST26,
  author  = {Stolyarov, Dmitriy and Tyulenev, Alexander},
  title   = {Catching jumps of metric-valued mappings
             with Lipschitz functions},
  journal = {Annali della Scuola Normale Superiore di Pisa,
             Classe di Scienze},
  year    = {2026},
  note    = {To appear},
  doi     = {10.2422/2036-2145.202603_007}
}

@article{AHPCS18,
  title={Infinity modulus and the essential metric},
  author={Albin, Nathan and Hoppis, Jared and Poggi-Corradini, Pietro and Shanmugalingam, Nageswari},
  journal={Journal of Mathematical Analysis and Applications},
  volume={467},
  number={1},
  pages={570--584},
  year={2018},
  publisher={Elsevier}
}

@article{T26,
  title={A New insight into the Ambrosio--Reshetnyak approach to Sobolev maps},
  author={Tyulenev, Alexander},
  journal={arXiv preprint arXiv:2610.04384},
  year={2026}
}

@unpublished{P26,
  author   = {Pavlov, Stepan V.},
  title    = {Estimate of the differential of a mapping from the plane into a metric space by the differential of its composition with a $1$-Lipschitz function},
  note     = {Manuscript in preparation (2026)},
  sortyear = {2026}
}

@article{S26,
  title={A random Lipschitz function detecting jumps},
  author={Stolyarov, Dmitriy},
  journal={arXiv preprint arXiv:2608.09868},
  year={2026}
}

@article{B26,
  author    = {V. I. Bakhtin},
  title     = {Criterion for absolute continuity of curves in metric spaces},
  journal   = {Mat. Zametki},
  volume    = {120},
  number    = {3},
  pages     = {339--347},
  year      = {2026},
  doi       = {10.4213/mzm14855},
  url       = {https://www.mathnet.ru/eng/mzm14855}
}

@article{O25,
  author  = {Oleinik, Roman},
  title   = {Asymptotic Relations of the
             Bourgain--Brezis--Mironescu Type for
             Mappings Between Singular Spaces},
  journal = {The Journal of Geometric Analysis},
  volume  = {35},
  number  = {7},
  eid     = {196},
  year    = {2025},
  doi     = {10.1007/s12220-025-02019-y}
}

@article{O26.1,
  author  = {Oleinik, Roman D.},
  title   = {On estimates of the Bourgain--Brezis--Mironescu
             type in the singular context},
  journal = {Journal of Mathematical Analysis and Applications},
  volume  = {564},
  number  = {2},
  eid     = {130887},
  year    = {2026},
  doi     = {10.1016/j.jmaa.2026.130887}
}

@article{LD13,
  title={Lipschitz and path isometric embeddings of metric spaces},
  author={Le Donne, Enrico},
  journal={Geometriae Dedicata},
  volume={166},
  number={1},
  pages={47--66},
  year={2013},
  publisher={Springer}
}

@article{OT24,
  author  = {Oleinik, Roman D. and Tyulenev, Alexander I.},
  title   = {Characterization of {AC} and Sobolev curves
             via Lipschitz post-compositions},
  journal = {Mathematische Annalen},
  volume  = {395},
  eid     = {104},
  year    = {2026},
  doi     = {10.1007/s00208-026-03516-3}
}

@book{NB10,
  title={Topological vector spaces},
  author={Narici, Lawrence and Beckenstein, Edward},
  edition={2},
  year={2010},
  publisher={Chapman and Hall/CRC},
  isbn={9781584888666}
}

@article{AILP24,
  title={Metric Sobolev Spaces I: Equivalence of Definitions},
  author={Ambrosio, Luigi and Ikonen, Toni and Lu{\v{c}}i{\'c}, Danka and Pasqualetto, Enrico},
  journal={Milan Journal of Mathematics},
  volume={92},
  number={2},
  pages={255--347},
  year={2024},
  publisher={Springer}
}

@article{BNP25,
  author  = {Brena, Camillo and Nobili, Francesco and Pasqualetto, Enrico},
  title   = {Maps of bounded variation from {PI} spaces to metric spaces},
  journal = {Annali della Scuola Normale Superiore di Pisa, Classe di Scienze},
  series  = {5},
  volume  = {26},
  year    = {2025},
  pages   = {1475--1519},
  doi     = {10.2422/2036-2145.202307_003},
}

@book{W18,
  title={Lipschitz algebras},
  author={Weaver, Nik},
  edition={2},
  year={2018},
  publisher={World Scientific},
  isbn={9789814740630}
}

@article{AGS13,
  author  = {Ambrosio, Luigi and Gigli, Nicola and
             Savar{\'e}, Giuseppe},
  title   = {Density of Lipschitz functions and equivalence
             of weak gradients in metric measure spaces},
  journal = {Revista Matem{\'a}tica Iberoamericana},
  volume  = {29},
  number  = {3},
  pages   = {969--996},
  year    = {2013},
  doi     = {10.4171/RMI/746}
}

@book{GP20,
Author = {Gigli, Nicola and Pasqualetto, Enrico},
ISBN = {978-3-030-38612-2},
Publisher = {Springer, Cham},
Series = {SISSA Springer Series, 2.},
Title = {Lectures on nonsmooth differential geometry.},
URL = {https://search.ebscohost.com/login.aspx?direct=true&db=msn&AN=MR4321459&lang=ru&site=ehost-live},
Year = {2020},
}

@article{GT21,
  author  = {Gigli, Nicola and Tyulenev, Alexander},
  title   = {Korevaar--{Schoen}'s energy on strongly
             rectifiable spaces},
  journal = {Calculus of Variations and Partial Differential Equations},
  volume  = {60},
  number  = {6},
  eid     = {235},
  year    = {2021},
  doi     = {10.1007/s00526-021-02028-z}
}

@book{HKST15,
  author    = {Heinonen, Juha and Koskela, Pekka and Shanmugalingam, Nageswari and Tyson, Jeremy T.},
  title     = {Sobolev spaces on metric measure spaces: An approach based on upper gradients},
  series    = {New Mathematical Monographs},
  volume    = {27},
  publisher = {Cambridge University Press, Cambridge},
  year      = {2015},
  pages     = {xii+434},
  isbn      = {978-1-107-09234-1},
  doi       = {10.1017/CBO9781316135914},
  url       = {https://doi.org/10.1017/CBO9781316135914}
}

@article {R97,
    AUTHOR = {Reshetnyak, Yu. G.},
     TITLE = {{S}obolev classes of functions with values in a metric space},
   JOURNAL = {Sibirsk. Mat. Zh.},
  FJOURNAL = {Rossi\v{\i}skaya Akademiya Nauk. Sibirskoe Otdelenie. Institut
              Matematiki im. S. L. Soboleva. Sibirski\v{\i} Matematicheski\v{\i} Zhurnal},
    VOLUME = {38},
      YEAR = {1997},
    NUMBER = {3},
     PAGES = {657--675, iii--iv},
      ISSN = {0037-4474},
   MRCLASS = {46E35},
  MRNUMBER = {1457485},
MRREVIEWER = {H. Triebel},
       DOI = {10.1007/BF02683844},
       URL = {https://doi.org/10.1007/BF02683844},
}

@article{BB18,
  author  = {Bj{\"o}rn, Anders and Bj{\"o}rn, Jana},
  title   = {Local and semilocal Poincar{\'e} inequalities
             on metric spaces},
  journal = {Journal de Math{\'e}matiques Pures et Appliqu{\'e}es},
  volume  = {119},
  pages   = {158--192},
  year    = {2018},
  doi     = {10.1016/j.matpur.2018.05.005}
}

@article{A90,
  author  = {Ambrosio, Luigi},
  title   = {Metric space valued functions of bounded variation},
  journal = {Annali della Scuola Normale Superiore di Pisa.
             Classe di Scienze. Serie IV},
  volume  = {17},
  number  = {3},
  pages   = {439--478},
  year    = {1990}
}
\end{document}